\documentclass[12pt]{amsart}
\usepackage[utf8x]{inputenc}
\usepackage{amsmath}
\usepackage{mathrsfs}
\usepackage{amstext}
\usepackage{amsxtra}
\usepackage{amssymb}
\usepackage{amsgen}
\usepackage{amsfonts}
\usepackage{amsthm}
\usepackage{esint}
\usepackage{amscd}
\usepackage{latexsym}
\usepackage{mathabx}

\usepackage{fullpage}

\usepackage{cite}
\usepackage{setspace}
\usepackage{bbm}
\usepackage{cancel}
\usepackage{color}


\numberwithin{equation}{section}

\newtheorem{theorem}{Theorem}[section]

\newtheorem{proposition}{Proposition}[section]

\newtheorem{definition}{Definition}[section]

\theoremstyle{remark}
\newtheorem{remark}{Remark}[section]

\title{Existence and Uniqueness of Energy Solutions to the Stochastic Diffusive Surface Quasi-Geostrophic Equation with Additive Noise}
\author{Nathan Totz}

\begin{document}

\maketitle

\begin{abstract}
We continue our study of the dynamics of a nearly inviscid periodic surface quasi-geostrophic equation.  Here we consider a slightly diffusive stochastic SQG equation of the form
\begin{equation*}
\begin{cases}
d\theta_t + |D|^{2\delta}\theta_t\,dx + (u_t \cdot \nabla)\theta_t\,dx + |D|^{\delta}dW_t = 0 \\
u_t = \nabla^\perp|D|^{-1}\theta_t.
\end{cases}
\end{equation*}
Analogous to the previous work \cite{TotzMSQG}, we find that we can construct global energy solutions as introduced in \cite{gonjar2014} for any $\delta > 0$, so that any small amount of diffusion permits us to construct solutions.  We show moreover that pathwise uniqueness of these energy solutions holds in the presence of sufficiently high diffusion $\delta > \frac32$.
\end{abstract}

\section{Introduction}

We consider the existence and uniqueness of global solutions to the following stochastic SQG equation defined on $\mathbb{T}^2$:
\begin{equation}\label{SQGTheta}
\begin{cases}
d\theta_t + |D|^{2\delta}\theta_t\,dx + (u_t \cdot \nabla)\theta_t\,dx + |D|^{\delta}dW_t = 0 \\
u_t = \nabla^\perp|D|^{-1}\theta_t,
\end{cases}
\end{equation}
where $\nabla^\perp := (-\partial_y, \partial_x)$ and $|D|$ is the Fourier multiplier with symbol $|\xi|$, and $dW_t$ is space-time white noise.  This system is a generalization of the classical deterministic inviscid surface quasi-geostrophic equation:

\begin{equation}\label{InviscidSQGTheta}
\begin{cases}
\frac{\partial\theta}{\partial t} + (u \cdot \nabla)\theta = 0 \\
u = \nabla^\perp|D|^{-1}\theta.
\end{cases}
\end{equation}

Equation \eqref{InviscidSQGTheta} is a geophysical model in atmospheric sciences which has been systematically studied by Constantin, Majda, and Tabak \cite{comata2}, who were motivated to study \eqref{InviscidSQGTheta} because of its formal similarity to the 3D Euler equations; by Pierrehumbert, Held, and Swanson; by Held, Pierrehumbert, Garner and Swanson and others (see \cite{comata1, comata2, pierre, held} and references therein).
This equation has since attracted a lot of attention in the pure mathematics community. Many interesting results describing the behavior of \eqref{InviscidSQGTheta} have been obtained, see e.g. \cite{re, co, cofe, wu, caco}.  The question of global in time existence and uniqueness of strong solutions to \eqref{SQGTheta} or global regularity is still an outstanding open problem, just as for the 3D Euler equations.  We recall that a solution $\theta$ of \eqref{InviscidSQGTheta} conserves $\|\theta\|_{L^p}$ for all $1 \leq p \leq \infty$, as well as the $\dot{H}^{-\frac12}$-norm.

Resnick \cite{re} proved the existence of deterministic weak solutions to \eqref{InviscidSQGTheta} in the class $\theta \in L^2$, and Marchand \cite{Marchand08} extended to the classes $\theta \in L^p, H^{-\frac12}$.  These results crucially rely on the classical conserved quantities of the equation.  In \cite{TotzMSQG}, the authors followed a method of \cite{alcr} to construct solutions almost surely in the class $\theta \in H^{-3^-}$ of a related modified SQG equation

\begin{equation}\label{ModifiedSQGTheta}
\begin{cases}
\frac{\partial\theta}{\partial t} + (u \cdot \nabla)\theta = 0 \\
u = \nabla^\perp|D|^{-1 - \delta}\theta
\end{cases}
\end{equation}
almost surely with respect to an invariant Gaussian measure which serves as a substitute for the conserved quantities at low regularity.  The results in \cite{TotzMSQG} required $\delta > 0$.  This is because in the $\delta = 0$ case the nonlinearity of \eqref{ModifiedSQGTheta} fails to make sense even in the distributional sense, and the consequently the construction method used in \cite{alcr} fails.

In an attempt to understand the well-posedness of \eqref{InviscidSQGTheta}, the following diffusive modification of SQG has been proposed:

\begin{equation}\label{DiffusiveSQGTheta}
\begin{cases}
\frac{\partial\theta}{\partial t} + |D|^{2\delta}\theta + (u \cdot \nabla)\theta = 0 \\
u = \nabla^\perp|D|^{-1}\theta.
\end{cases}
\end{equation}

Equation \eqref{DiffusiveSQGTheta} inherits a notion of criticality associated to the $L^\infty$ norm of the solution, since this is the strongest of the norms that are \textit{a priori} conserved in the inviscid case \eqref{InviscidSQGTheta}.  We call the equation subcritical, critical, and supercritical when $\delta$ is greater, equal to, or less than $\frac12$, respectively.

The global well-posedness of $L^2$ solutions in the subcritical case $\delta > \frac12$ was established by Constantin-Wu \cite{cw}.  Global existence of solutions in the more delicate critical case $\delta = \frac12$ were independently shown by several authors \cite{knv}, \cite{cv}, \cite{convic} using a variety of novel techniques.  As with the inviscid equation, global well-posedness in the supercritical case $\delta < \frac12$ remains an open problem.  However, there are a number of well-posedness results both local and global for sufficiently small initial data, c.f. \cite{coco, ju, zelvic} and their references, as well as regularity results (see for example \cite{silvestre2010} and its references).

In this paper we consider a different method of randomizing \eqref{InviscidSQGTheta} formed by adding a stochastic forcing term along with a diffusion term, yielding our original equation \eqref{SQGTheta}.  The particular choice of diffusive term and noise term are balanced so that the Gaussian invariant measure agrees with the conserved $L^2$ norm of the solution.  This invariant Gaussian measure plays a key role in our analysis.

The fundamental obstacle in constructing solutions to nonlinear stochastic PDE is that their solutions must necessarily lie in extremely rough spaces of functions in which the nonlinearities of the equation cannot be directly defined in the sense of distributions.  There are by now a number of differing versions of probabilistic solutions that overcome this problem, including the \textit{paracontrolled solutions} introduced by Gubinelli-Imkeller-Perkowski \cite{gip}, as well as the \textit{regularity structures} developed by Hairer \cite{hairer}.  We choose here to study the so-called \textit{energy solutions} of \eqref{SQGTheta}, a forerunner of paracontrolled solutions introduced in \cite{gonjar2014}, used in \cite{gj} to study weak probabilistic solutions to the stochastic Burgers equation, and elaborated upon in the lecture notes \cite{gpbrazil}.  For the precise definition of energy solutions, see Definitions \ref{ControlledProcessDefinition} and \ref{EnergySolutionDefinition}.  Our result is contained in the following



\begin{theorem}
Let $\delta > 0$ and $T > 0$ be given.  There is an $\epsilon_0 > 0$ depending on $\delta$ and $T$ so that for every $\epsilon \in (0, \epsilon_0)$ the following holds:
\begin{itemize}
\item{(Existence)  Denote $s = (-\epsilon) \wedge (2\delta - 1 - 3\delta\epsilon)$.  There exists a probability space $(\Omega, \mathbb{P}, \mathfrak{F})$ so that for every $\theta_0 \in \mathcal{F}L^{\infty, s}$, there exists almost surely an energy solution $\theta$ to \eqref{SQGTheta} with initial data $\theta_0$ with mean zero in the class $C([0, T] : \mathcal{F}L^{\infty, s})$.  The solutions are stationary with law given by a Gaussian measure with correlation $\frac12\|\theta\|_{L^2}^2$.}
\item{(Uniqueness)  If moreover $\delta > \frac32$, then energy solutions to \eqref{SQGTheta} are pathwise unique in $C([0, T] : \mathcal{F}L^{\infty, s})$, and consequently can be extended to a global solution in the class $C([0, \infty) : \mathcal{F}L^{\infty, s})$.}
\end{itemize}
\end{theorem}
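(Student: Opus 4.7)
The plan is to implement the Galerkin/invariant-measure scheme for energy solutions (see \cite{gonjar2014, gj, gpbrazil}), adapted to the dissipation $|D|^{2\delta}$ and the nonlocal transport structure of \eqref{SQGTheta}. Let $\Pi_N$ denote the projection onto Fourier modes with $|\xi|\le N$. For the finite-dimensional Galerkin truncation of \eqref{SQGTheta}, the nonlinear drift is divergence-free in phase space and $L^2$-orthogonal to $\Pi_N\theta$, while the dissipation $|D|^{2\delta}$ and the noise $|D|^\delta dW_t$ are exactly balanced so that the Gaussian measure $\mu_N$ with covariance $\tfrac12\|\Pi_N\theta\|_{L^2}^2$ is invariant for the truncated SDE. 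Running the Galerkin dynamics from initial data distributed according to $\mu_N$ produces a stationary process $\theta^N$ whose marginal laws are all equal to $\mu_N$.

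\textbf{Passage to the limit and verification of the energy-solution structure.} The stationarity combined with the It\^o trick yields, for cylindrical additive functionals $A^N$ of $\theta^N$, a bound of the form
$$\mathbb{E}\left|\int_0^t A^N_s\,ds\right|^2 \lesssim t\,\||D|^{-\delta}A^N\|_{L^2(\mu_N)}^2,$$
as in \cite{gonjar2014}. Applied mode-by-mode to $A^N = \Pi_N(u^N\cdot\nabla\Pi_N\theta^N)$, this gives the additional time regularity needed for Kolmogorov-type tightness of the laws of $\theta^N$ in $C([0,T];\mathcal{F}L^{\infty,s})$. The scaling $s = (-\epsilon)\wedge(2\delta - 1 - 3\delta\epsilon)$ appears naturally: the first term is the regularity of the stationary Ornstein--Uhlenbeck solution of the linear part driven by $|D|^\delta dW_t$ in two dimensions, while $2\delta - 1 - 3\delta\epsilon$ is the regularity of the integrated nonlinearity produced by the It\^o trick, after accounting for the gain of $\delta$ derivatives from the dissipation and small Besov losses needed to define the product $u^N\nabla\theta^N$. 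Extracting a weak limit and checking the controlled process decomposition of Definition \ref{ControlledProcessDefinition} together with the energy-solution conditions of Definition \ref{EnergySolutionDefinition} then reduces to verifying that the sequence of integrated nonlinearities converges, which follows from the uniform It\^o-trick bounds; the main obstacle here is showing that the limiting nonlinearity is independent of the extracted subsequence and of the Galerkin cutoff, which is resolved by the pointwise-in-Fourier stability of the It\^o-trick bound.

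\textbf{Uniqueness for $\delta > \tfrac32$.} I expect pathwise uniqueness to follow from a direct $L^2$ energy estimate on the difference $\rho = \theta^1 - \theta^2$ of two energy solutions with common noise and initial datum. The difference equation is deterministic,
$$\partial_t\rho + |D|^{2\delta}\rho + (u^\rho\cdot\nabla)\theta^1 + (u^2\cdot\nabla)\rho = 0,$$
and pairing with $\rho$ in $L^2$ kills the $(u^2\cdot\nabla)\rho$ term by incompressibility. After integrating the remaining nonlinear term by parts, H\"older and the identity $u^\rho = \nabla^\perp|D|^{-1}\rho$ give
$$\left|\int (u^\rho\cdot\nabla\rho)\theta^1\,dx\right| \lesssim \|\theta^1\|_{L^\infty}\|u^\rho\|_{L^2}\|\nabla\rho\|_{L^2} \lesssim \|\theta^1\|_{L^\infty}\|\rho\|_{L^2}\|\rho\|_{\dot H^\delta},$$
using $\delta\ge 1$. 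Young's inequality absorbs the $\dot H^\delta$ factor into the dissipation, leaving $\tfrac{d}{dt}\|\rho\|_{L^2}^2 \lesssim \|\theta^1\|_{L^\infty}^2\|\rho\|_{L^2}^2$. The threshold $\delta > \tfrac32$ is precisely what is required to push $s > 2$ for small $\epsilon$, which in turn yields the Sobolev-type embedding $\mathcal{F}L^{\infty,s}\hookrightarrow L^\infty$ on $\mathbb{T}^2$ by summability of $\sum_\xi\langle\xi\rangle^{-s}$; stationarity then ensures $\|\theta^1\|_{L^\infty}^2$ is integrable in $t$ almost surely, and Gronwall forces $\rho\equiv 0$. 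The main technical obstacle is justifying this formal $L^2$ computation at the regularity of energy solutions, which I plan to handle by performing the estimate on Galerkin truncations of the two solutions and passing to the limit using the controlled-process structure together with the enhanced dissipation available for $\delta > \tfrac32$. Once pathwise uniqueness on $[0,T]$ is established, the stationarity and the arbitrariness of $T$ permit extension to $C([0,\infty);\mathcal{F}L^{\infty,s})$ by concatenation.
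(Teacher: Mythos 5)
Your existence outline follows essentially the same route as the paper: Galerkin truncation started from the invariant Gaussian measure (the paper works in the streamline variable $\psi=|D|^{-1}\theta$, with the measure built on $\|\psi\|_{H^1}=\|\theta\|_{L^2}$), infinitesimal invariance of that measure under the truncated flow, the It\^o trick for the time-integrated nonlinearity, Kolmogorov-type tightness in $C([0,T]:\mathcal{F}L^{\infty,s})$, and identification of the limit with the controlled-process/energy-solution structure. The paper implements the It\^o trick concretely by solving the Poisson equation $L_0H^N=B^N$ with $H^N(\psi)=-\int_0^\infty B^N(e^{-|D|^{2+2\delta}t}\psi)\,dt$ and proving exponential moment bounds on $\mathcal{E}^\delta(H^{N,\pm}_k)$, which is what your schematic bound would have to become, but the strategy is the same.

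The uniqueness part, however, has a genuine gap. The exponent $s=(-\epsilon)\wedge(2\delta-1-3\delta\epsilon)$ is a minimum, so $s\le-\epsilon<0$ for every $\delta$; for $\delta>\tfrac32$ the solution class is $\mathcal{F}L^{\infty,-\epsilon}$ (scaling like $H^{-1^-}$), not a class with $s>2$, and the stationary marginal of $\theta_t$ is spatial white noise, which almost surely lies in no $L^p$ space. Consequently every quantity in your energy estimate --- $\|\rho\|_{L^2}$, $\|\rho\|_{\dot H^\delta}$, and above all $\|\theta^1\|_{L^\infty}$ --- is almost surely infinite, the pairing of the difference equation with $\rho$ in $L^2$ is not defined, and the claim that stationarity makes $\|\theta^1_t\|_{L^\infty}^2$ time-integrable is false; nor can parabolic smoothing help, since the rough forcing keeps the solution at white-noise regularity for all $t>0$, so regularizing via Galerkin truncations and passing to the limit cannot recover the Gronwall factor. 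Your heuristic for the threshold ($\delta>\tfrac32$ forcing $\mathcal{F}L^{\infty,s}\hookrightarrow L^\infty$) therefore rests on a misreading of the wedge. The paper's uniqueness argument is of a different nature: following \cite{gj}, it compares an arbitrary energy solution with the Galerkin approximations in the Duhamel (mild) form, controls the residual drift $\varphi^{(N)}_t=\int_0^te^{-|D|^{2\delta}(t-\tau)}(B-B_N)(\psi_\tau)\,d\tau$ by the It\^o-trick/exponential bounds together with a Borel--Cantelli argument, controls the bilinear terms through an operator quantity $\mathcal{I}_N(k,T)$ made small for small $T$ via Gaussian hypercontractivity --- and this is exactly where $\delta>\tfrac32$ enters, through convergence of the two-dimensional sums $\sum_h|h|^{-(s-1)}$ with the streamline regularity $s=2\delta-3\delta\epsilon>3$ --- and then closes with an absorption inequality for $\mathcal{D}_N$ and a continuation-in-time argument to reach arbitrary $T$ and then $[0,\infty)$. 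To repair your proof you would need to replace the classical $L^2$ energy method by an argument of this type that uses only the controlled-process and martingale structure, not pointwise bounds on the solution.
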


\begin{remark}
The Fourier-Lebesgue space $\mathcal{F}L^{\infty, s}$ above is defined in the next section and corresponds at the level of scaling to the standard $L^2$-based Sobolev class $H^{-1^-}$ in the subcritical regimes $\delta > \frac12$, and $H^{(2\delta - 2)^-}$ in the critical and supercritical regimes $0 < \delta \leq \frac12$.  The regularity class does not increase with increasing diffusion in the subcritical regime due to the dominance of the linear terms of the equation and our choice of regularity of the stochastic forcing term.
\end{remark}

\begin{remark}
We observe that, just as in \cite{TotzMSQG}, the endpoint case $\delta = 0$ corresponding formally to a stochastic inviscid SQG cannot be treated with the method used here.
\end{remark}

\begin{remark}
The sense of pathwise uniqueness that we use here is made precise in Definition \ref{PathwiseUniquenessDefinition}.
\end{remark}

\textbf{Acknowledgments.}  The author would like to thank Andrea Nahmod for her many helpful discussions.  The work done in this paper was partially supported by A. Nahmod's grants NSF DMS-1463714 and NSF DMS-1800852.  The author would also like to thank Gigliola Staffilani and Natasa Pavlovic for their comments on earlier versions of this draft.

\subsection{Notation and Analytic Setting}

\subsubsection{Functional Analytic Setting}  Let $\mathbb{Z}_0^2 = \mathbb{Z}^2 \setminus \{0\}$, and let $(e_k)_{k \in \mathbb{Z}^2}$ be a complex orthonormal basis of $L^2$ with $e_0 = 1$ and $e_{-k}(x) = \overline{e}_k(x)$.  We take the Fourier transform of $f \in \mathbb{T}^2$ to be $$\hat{f}(k) = \frac{1}{4\pi^2}\int_{\mathbb{T}^2} f(x)e_{-k}(x) \, dx,$$ so that $$f(x) = \sum_{k \in \mathbb{Z}} \hat{f}(k)e_k(x).$$
Define the space of Schwarz functions $\mathscr{S}(\mathbb{T}^2)$ by the space of functions $f \in C(\mathbb{T}^2)$ for which the seminorms $\sup_{k \in \mathbb{Z}}| k^\alpha \hat{f}(k)|$ are bounded for any choice of $\alpha$. The space of tempered distributions $\mathscr{S}^\prime(\mathbb{T}^2)$ consists of the linear functionals $f : \mathscr{S} \to \mathbb{C}$ continuous under the $\text{weak}^*$ topology induced by the above seminorms on $\mathscr{S}$.  Accordingly, for any $f \in \mathscr{S}^\prime$ the Fourier transform $\hat{f}$ for $f$ is well-defined by duality, and moreover $|\hat{f}(k)|$ grows at most polynomially in $k$.  

We will only consider tempered distributions $f$ with mean zero $\hat{f}(0) = 0$.  For any $s \in \mathbb{R}$ and $1 \leq p \leq \infty$, the Fourier-Lebesgue spaces $\mathcal{F}L^{p, s}$ are defined by $$\mathcal{F}L^{p, s} = \left\{f \in \mathscr{S}^\prime : \|f\|_{\mathcal{F}^{p, s}}^p := \sum_{k \in \mathbb{Z}_0^2} |k|^{ps}|\hat{f}(k)|^p < \infty \right\}$$ with the usual modification in the case $p = \infty$.  We further denote by $\mathcal{F}L^{2, s} = H^s$, which is equivalent to the usual $L^2$ Sobolev spaces through Plancherel's identity.

We will often identify $f = \sum_k f_k e_k$ with an element $(f_k)_{k \in \mathbb{Z}^2}$ of the countably infinite space $\mathbb{C}^{\mathbb{Z}^2}$ and hence need to consider functions $\phi$ of the state variables $(f_k)_{k \in \mathbb{Z}^2}$.  We call $\phi$ a \textit{cylindrical test function} if $\phi$ is a smooth and compactly supported function of finitely many of the state variables $(f_k)$.  If $\phi$ is cylindrical, we define its gradient $\mathbb{D}\phi$ by $$\mathbb{D}\phi := \sum_{k \in \mathbb{Z}_0^2} (\mathbb{D}_k \phi) e_k := \sum_{k \in \mathbb{Z}_0^2} \frac{\partial\phi}{\partial f_k}e_k.$$

\subsubsection{Probabilistic Tools}  Given a real-valued stochastic process $(X_t)_{t \geq 0}$, its quadratic variation $([X]_t)_{t \geq 0}$ is defined to be
\begin{equation}
[X]_t := \lim_{\epsilon \to 0} \int_0^t \frac{1}{\epsilon}(X_{\tau + \epsilon} - X_\tau)^2 \, d\tau,
\end{equation}
where the convergence is uniform on compacts in probability.  If $X$ is a continuous martingale, then this definition of the quadratic variation recovers the usual semimartingale quadratic variation of $X$.

We also recall here the Doob Martingale Inequality.

\begin{theorem}
(c.f. \cite{Kuo})  Suppose that $(X_t)_{t \in [0, T]}$ is a real-valued submartingale.  Then
$$\mathbb{P}\left(\sup_{t \in [0, T]} X_t \geq M \right) \leq \frac{\mathbb{E}(X_T \vee 0)}{M}.$$
Accordingly, for $1 < p < \infty$,
$$\|X_T\|_{L^p(\mathbb{P})} \leq \left\|\sup_{t \in [0, T]} X_t \right\|_{L^p(\mathbb{P})} \leq \frac{p}{p - 1} \|X_T\|_{L^p(\mathbb{P})}.$$
\end{theorem}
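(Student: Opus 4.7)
The plan is to follow the classical stopping-time proof of Doob's maximal inequality and then bootstrap to the $L^p$ bound via the layer-cake representation. First I would fix $M > 0$, set $A = \{\sup_{t \in [0,T]} X_t \geq M\}$, and introduce the stopping time
\begin{equation*}
\tau := \inf\{t \in [0,T] : X_t \geq M\} \wedge T,
\end{equation*}
which is bounded by $T$. Assuming $X$ has a right-continuous modification (the standard setting in which this inequality is invoked later in the paper), on the event $A$ the infimum is attained in $[0,T]$, so $X_\tau \geq M$ on $A$ and $\tau = T$ on $A^c$. Applying the Optional Stopping Theorem for submartingales with bounded stopping times gives $\mathbb{E}(X_\tau) \leq \mathbb{E}(X_T)$. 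Splitting expectations over $A$ and its complement and cancelling the common $\mathbb{E}(X_T \mathbf{1}_{A^c})$ piece yields
\begin{equation*}
M \, \mathbb{P}(A) \leq \mathbb{E}(X_\tau \mathbf{1}_A) \leq \mathbb{E}(X_T \mathbf{1}_A) \leq \mathbb{E}(X_T \vee 0),
\end{equation*}
which is the first inequality in the theorem.

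For the $L^p$ statement, the lower bound $\|X_T\|_{L^p} \leq \|\sup_t X_t\|_{L^p}$ is immediate from the pointwise inequality. For the upper bound, I would apply the maximal inequality to the nonnegative submartingale $(X_t \vee 0)$, which is a submartingale by Jensen applied to the convex function $x \mapsto x \vee 0$. Writing $X^* := \sup_{t \in [0,T]} (X_t \vee 0)$ and using the layer-cake identity together with the weak-type bound just proved gives
\begin{equation*}
\mathbb{E}[(X^*)^p] = \int_0^\infty p M^{p-1} \mathbb{P}(X^* \geq M)\,dM \leq \int_0^\infty p M^{p-2}\, \mathbb{E}\bigl[(X_T \vee 0) \mathbf{1}_{\{X^* \geq M\}}\bigr]\,dM.
\end{equation*}
Swapping the order of integration via Fubini and performing the inner $dM$ integral from $0$ to $X^*$ produces $\tfrac{p}{p-1}\mathbb{E}[(X_T \vee 0)(X^*)^{p-1}]$, and Hölder's inequality with conjugate exponents $p$ and $p/(p-1)$ then yields
\begin{equation*}
\|X^*\|_{L^p}^p \leq \frac{p}{p-1}\,\|X_T \vee 0\|_{L^p}\,\|X^*\|_{L^p}^{p-1}.
\end{equation*}
Dividing through by $\|X^*\|_{L^p}^{p-1}$ and bounding $\|X_T \vee 0\|_{L^p} \leq \|X_T\|_{L^p}$ gives the stated constant.

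The main technical wrinkle is that the division step requires $\|X^*\|_{L^p}$ to be finite a priori; otherwise one is dividing $\infty$ by $\infty$. I would handle this by first running the argument with $X^*$ replaced by the truncation $X^* \wedge N$, for which finiteness is automatic, obtaining the bound with a constant independent of $N$, and then letting $N \to \infty$ via monotone convergence to conclude. Everything else—optional stopping, Fubini, Hölder—is textbook; the only genuinely model-dependent input is the right-continuity of paths, which is harmless for the continuous (sub)martingales that arise from the stochastic integrals in the remainder of the paper.
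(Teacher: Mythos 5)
The paper offers no proof of this statement---it is recalled as a classical theorem with a citation to \cite{Kuo}---so there is nothing to compare against; your argument is the standard textbook proof (optional stopping for the weak-type bound, then layer-cake, Fubini, and H\"older with a truncation to justify dividing by $\|X^*\|_{L^p}^{p-1}$), and it is sound. Two small caveats, both really inherited from the statement rather than introduced by you: first, the claim that $X_\tau \geq M$ on $A$ for the event defined with ``$\geq$'' genuinely requires continuous paths (or a limiting argument over levels $M' \uparrow M$), since for a merely right-continuous path the set $\{t : X_t \geq M\}$ can be empty while $\sup_t X_t = M$; second, the leftmost inequality $\|X_T\|_{L^p} \leq \|\sup_{t} X_t\|_{L^p}$ is not ``immediate from the pointwise inequality'' for a general real-valued submartingale, because $X_T \leq \sup_t X_t$ does not give $|X_T| \leq |\sup_t X_t|$ (take $X_T = -5$, $\sup_t X_t = 3$); it holds once $X$ is nonnegative, which is the case for every process to which the inequality is applied in this paper.
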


We will also have occasion to use the Burkholder-Davis-Gundy inequality:

\begin{theorem}
(c.f. \cite{strookbook})  If $M_t$ is a local martingale with $M_0 = 0$ and we denote $S_T = \max_{t \in [0, T]} |M_t|$, then for any $1 \leq p < \infty$ there is a constant $C_p$ for which
\begin{equation}
C_p^{-1}\mathbb{E}([M]_t^\frac{p}{2}) \leq \mathbb{E}(S_t^p) \leq C_p \mathbb{E}([M]_t^\frac{p}{2}).
\end{equation}
\end{theorem}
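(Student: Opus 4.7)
The plan is to combine It\^o's formula with the Doob maximal inequality recorded just above, supplementing with a good-lambda argument to cover the range $1\le p<2$. Throughout, I would work under the assumption that $M$ is a continuous local martingale, which is the case relevant to the SPDE analysis of this paper.

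First I would reduce to the case of bounded martingales by localization: choose stopping times $\tau_n \uparrow \infty$ such that $M^{\tau_n}$ and $[M]^{\tau_n}$ are bounded, prove the inequality for $M^{\tau_n}$ with constants independent of $n$, and pass to the limit with Fatou's lemma and monotone convergence. This keeps all quantities finite so that no division-by-infinity difficulties arise in the arguments below.

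For $p\ge 2$, since $x\mapsto |x|^p$ is $C^2$, It\^o's formula yields
\begin{equation*}
|M_t|^p = p\int_0^t |M_s|^{p-1}\sgn(M_s)\,dM_s + \frac{p(p-1)}{2}\int_0^t |M_s|^{p-2}\,d[M]_s.
\end{equation*}
Taking expectations kills the stochastic integral, bounding $|M_s|^{p-2}\le S_t^{p-2}$ and applying H\"older with conjugate exponents $p/(p-2)$ and $p/2$ gives
\begin{equation*}
\mathbb{E}(|M_t|^p) \le \frac{p(p-1)}{2}\,\mathbb{E}(S_t^p)^{(p-2)/p}\,\mathbb{E}([M]_t^{p/2})^{2/p}.
\end{equation*}
Combining this with the Doob bound $\mathbb{E}(S_t^p)\le (p/(p-1))^p\,\mathbb{E}(|M_t|^p)$ from the preceding theorem and dividing by $\mathbb{E}(S_t^p)^{(p-2)/p}$ yields the upper BDG inequality. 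The matching lower bound is obtained dually: from $d[M]_t = d(M_t^2) - 2M_t\,dM_t$ one writes $[M]_t^{p/2}$ via It\^o's formula, collects a stochastic-integral term and a term controlled by $S_t^{p} + S_t^{p-2}[M]_t$, and closes using the upper bound already in hand together with another H\"older split.

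The main obstacle is the range $1\le p<2$, where $|x|^p$ fails to be $C^2$ at the origin and the direct It\^o calculation breaks down. Here I would appeal to Burkholder's good-lambda method, proving a distributional inequality of the form
\begin{equation*}
\mathbb{P}\bigl(S_t>\beta\lambda,\ [M]_t^{1/2}\le \delta\lambda\bigr) \le C(\beta,\delta)\,\mathbb{P}(S_t>\lambda)
\end{equation*}
for $\beta>1$, $\delta>0$, with $C(\beta,\delta)\to 0$ as $\delta\to 0$. The proof uses the strong Markov property at $\tau_\lambda = \inf\{s:|M_s|>\lambda\}$ together with a Chebyshev estimate applied to the post-$\tau_\lambda$ martingale, whose quadratic variation is bounded by $(\delta\lambda)^2$ on the relevant event. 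Integrating this pointwise-in-$\lambda$ inequality against $p\lambda^{p-1}\,d\lambda$ and choosing $\beta,\delta$ so that $C(\beta,\delta)\beta^p$ is strictly less than one produces both bounds simultaneously for every $1\le p<\infty$. Although this good-lambda route in fact subsumes the $p\ge 2$ case, I would present the It\^o argument first because it is much more transparent and produces explicit constants in the regime most often used in applications.
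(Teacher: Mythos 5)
The paper does not actually prove this statement: it is quoted as standard background with a citation to Stroock's book, so there is no in-paper argument to compare against. Your sketch is the classical textbook proof (It\^o plus Doob for large exponents, Burkholder's good-lambda method in general) and is essentially sound for continuous local martingales, which is the only case the paper uses. Three remarks. First, the single distributional inequality you state, $\mathbb{P}\bigl(S_t>\beta\lambda,\ [M]_t^{1/2}\le\delta\lambda\bigr)\le C(\beta,\delta)\,\mathbb{P}(S_t>\lambda)$, yields only the upper bound $\mathbb{E}(S_t^p)\le C_p\,\mathbb{E}([M]_t^{p/2})$ after integration against $p\lambda^{p-1}\,d\lambda$; the lower bound needs the companion good-lambda inequality with the roles of $S_t$ and $[M]_t^{1/2}$ interchanged, proved by the same stopping-time and Chebyshev device applied to the increment of $[M]$ after $\inf\{s:[M]_s>\lambda^2\}$. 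Second, in the It\^o route your dual argument for the lower bound at exponent $p$ requires an $L^{p/2}$ bound on $\int_0^t M_s\,dM_s$, which for $2\le p<4$ calls on a BDG upper bound at an exponent below $2$ that the It\^o computation has not yet furnished; this circularity is harmless only because, as you note, the good-lambda argument independently covers the whole range. Third, you correctly flag the restriction to continuous $M$; the statement as printed is for general local martingales (where it still holds for $1\le p<\infty$, but the $p=1$ case requires Davis's decomposition), so it is worth saying explicitly, as you do, that continuity is the setting actually invoked in this paper, where every martingale that appears is a continuous It\^o-type stochastic integral.
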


\section{Streamline formulation with small viscosity and noise}

We note that the mean of $\theta$ is constant in time, and so without loss of generality we only study solutions with mean zero.  This will help to avoid technical problems with low frequencies.
Regarding $\theta$ as analogous to the scalar vorticity in the 2D Euler equation, we find it advantageous to introduce the streamline $\psi = |D|^{-1}\theta$.  The streamline $\psi$ then satisfies the following equation:

\begin{equation}\label{SQGstreamline}
d\psi_t + |D|^{2\delta}\psi_t \,dx + |D|^{-1}(\nabla^\perp \psi_t \cdot \nabla)|D|\psi_t\,dx + |D|^{\delta - 1}dW_t = 0.
\end{equation}

Introduce the following abbreviated notation for the nonlinear term:

\begin{equation}
B(\psi, \psi^\prime) := -|D|^{-1}(\nabla^\perp \psi \cdot \nabla)|D|\psi^\prime.
\end{equation}

Recall the $L^2(\mathbb{T}^2)$ orthonormal basis $(e_k)_{k \in \mathbb{Z}^2}$ and expand $\psi = \sum_k \psi_k e_k$ and $B(\psi) = \sum_k B_k(\psi) e_k$.  Using the symmetry in the following sums in $h_1$ and $h_2$ as well as the identity $(h_2^\perp \cdot h_1) = (h_2^{\perp\perp} \cdot h_1^\perp) = -(h_1^\perp \cdot h_2)$ gives

\begin{align}\label{NonlinearityComponent}
B_k(\psi) & = \sum_{h_1 + h_2 = k} -|k|^{-1}(h_1^\perp \cdot h_2)|h_2| \psi_{h_1} \psi_{h_2} \notag \\
& = \frac12 \sum_{h_1 + h_2 = k} -|k|^{-1}(h_1^\perp \cdot h_2)|h_2|\psi_{h_1} \psi_{h_2} - |k|^{-1}(h_2^\perp \cdot h_1)|h_1| \psi_{h_1} \psi_{h_2} \\
& = \frac12 \sum_{h_1 + h_2 = k} |k|^{-1}(h_1^\perp \cdot h_2)(|h_1| - |h_2|) \psi_{h_1} \psi_{h_2}. \notag
\end{align}

For brevity we introduce the coefficients

\begin{equation}
\alpha_{h_1, h_2, k} := |k|^{-1}(h_1^\perp \cdot h_2)(|h_1| - |h_2|)
\end{equation}

so that

\begin{equation}
B_k(\psi) = \sum_{h_1 + h_2 = k} \alpha_{h_1, h_2, k} \psi_{h_1} \psi_{h_2}.
\end{equation}

By construction we have $\alpha_{h_1, h_2, k} = \alpha_{h_2, h_1, k}$, and $B_{-k}(\psi, \psi) = B_k(\psi, \psi)$.  Therefore when regarded as a vector field in frequency, $B(\psi, \psi)$ is formally divergence-free:
$$\text{div}(B(\psi, \psi)) = \sum_k kB_k(\psi, \psi) = \sum_k -kB_{-k}(\psi, \psi) = -\text{div}(B(\psi, \psi)).$$

We introduce the following Gaussian measure $\rho$ which is built using the conserved $H^1$ norm of the streamline function\footnote{This is equivalent to the $L^2$ norm of $\theta$.}

\begin{equation}
\int e^{i\langle f, \psi\rangle} d\rho(f) = e^{-\frac12\|\psi\|_{H^1}^2} = \exp\left(-\frac12 \sum_k |k|^2 |\psi_k|^2\right),
\end{equation}

and is also characterized by the integration by parts formula\footnote{Here the appearance of the factor $\psi_{-k}$ is due to the fact that in the integration by parts, the only terms in the exponential that appear in the calculation are $$\frac12|\psi_k|^2 + \frac12|\psi_{-k}|^2 = \psi_k\psi_{-k},$$ where we have also used the reality condition $\overline{\psi}_k = \psi_{-k}$.}

\begin{equation}
\int \mathbb{D}_k F(\psi) \, d\rho(\psi) = \int |k|^2\psi_{-k} F(\psi) \, d\rho(\psi).
\end{equation}

First introduce the Ornstein-Uhlenbeck operator associated to our equation, defined for any cylindrical function $\phi$ of the stream function $\psi$:

\begin{equation}
L_0\phi(\psi) := \sum_k |k|^{2\delta} (|k|^2 \psi_k \mathbb{D}_k \phi(\psi) + \mathbb{D}_{k}\mathbb{D}_{-k} \phi(\psi)).
\end{equation}

We also introduce the Mallevin-Sobolev norm

\begin{equation}
\mathcal{E}^\delta(\phi)(\psi) := \frac12 \sum_k |k|^{2\delta} |\mathbb{D}_k \phi(\psi)|^2.
\end{equation}

The expression $\mathcal{E}^\delta(\phi)(\psi)$ is chosen with respect to the Gaussian measure $\rho$ so that the following integration by parts formula holds: 
$$\int \phi(\psi) L_0\phi(\psi) \, d\mu(\psi) = \int \mathcal{E}^\delta(\phi)(\psi) \, d\mu(\psi).$$

For further details consult \cite{gpbrazil}.

\section{The Truncated Systems  : Construction and Global Existence}

We will construct solutions to \eqref{SQGstreamline} by extracting a candidate from a sequence of solutions to a frequency truncated version of \eqref{SQGstreamline}.  To that end, for $N \geq 1$ denote
\begin{equation}
\Pi_N(f) = \sum_{|k| \leq N} f_k e_k
\end{equation}
and the truncated nonlinearity by $$B^N(f, g) = \Pi_N B(\Pi_N f, \Pi_N g).$$  We introduce
\begin{equation}\label{PsiNEquation}
\begin{cases}d\psi^{(N)} = -|D|^{2\delta}\psi^{(N)}dt + B_N(\psi^{(N)})dt - |D|^{\delta - 1}dW_t \\ \psi^{(N)}(0) = \psi_0,
\end{cases}
\end{equation}
where the initial data $\psi_0$ has distribution $\rho$.  Then we can decompose the approximate solution $$\psi^{(N)} = \Pi_N\psi^{(N)} + (I - \Pi_N)\psi^{(N)} := v^{(N)} + z^{(N)}$$ so that each component of $z^{(N)}$ is an Ornstein-Uhlenbeck process satisfying
$$
\begin{cases}
dz^{(N)} = -|D|^{2\delta}z^{(N)} dt - (I - \Pi_N)|D|^{\delta - 1}dW_{t}\\
z^{(N)}(0) = (I - \Pi_N)\psi_0,
\end{cases}
$$
where $\psi_0 = \sum_k \psi_{0, k}e_k$, and $v^{(N)}$ satisfies the finite dimensional system
\begin{equation}\label{FiniteDimensionalSystem}
\begin{cases}
dv^{(N)} = -|D|^{2\delta}v^{(N)}dt + B^N(v^{(N)})dt - |D|^{\delta - 1}\Pi_N dW_t \\
v^{(N)}_0 = \Pi_N \psi_0.
\end{cases}
\end{equation}
One can then explicitly check that $z^{(N)}(e_k) = \psi_0(e_k)$ has marginal distribution $\rho_k$; in particular the $z_k$ are independent and stationary.  At this point we can show that solutions to the truncated system are global.

\begin{proposition}
For each $N \geq 1$ and $T > 0$, the finite dimensional processes $v^{(N)}$ solving \eqref{FiniteDimensionalSystem} is almost surely defined on $[0, T]$.
\end{proposition}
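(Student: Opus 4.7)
The plan is to combine standard local existence for finite-dimensional SDEs with a global energy estimate based on the conserved $H^1$ norm of the streamline. Since $v^{(N)}$ has Fourier support in $\{|k|\leq N\}$, the system \eqref{FiniteDimensionalSystem} is a genuine finite-dimensional SDE with polynomial (hence locally Lipschitz) drift and constant additive diffusion. Classical It\^o theory then yields a unique strong solution on a random maximal interval $[0, \tau_\infty^{(N)})$, so it suffices to rule out explosion, i.e., to show $\tau_\infty^{(N)} > T$ almost surely.

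To this end I would apply It\^o's formula to the natural a priori quantity $\|v^{(N)}\|_{H^1}^2 = \sum_{0 < |k|\leq N}|k|^2 |v^{(N)}_k|^2$, chosen because at the level of the original variable $\theta = |D|\psi$ this is exactly the $L^2$ energy conserved by the deterministic inviscid equation. The resulting expansion consists of a nonpositive dissipative piece $-2\||D|^{\delta}v^{(N)}\|_{H^1}^2$, a local martingale produced by the noise, an It\^o correction equal to the deterministic constant $c_N := \sum_{0<|k|\leq N}|k|^{2\delta}$, and a nonlinear piece $2\langle v^{(N)}, B^N(v^{(N)})\rangle_{H^1}$. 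Because $\Pi_N$ commutes with $|D|^2$ and is self-adjoint on $H^1$, and since $\Pi_N v^{(N)} = v^{(N)}$, the nonlinear piece equals $2\langle v^{(N)}, B(v^{(N)}, v^{(N)})\rangle_{H^1}$. Writing $\theta := |D|v^{(N)}$ and $u := \nabla^\perp v^{(N)}$ then gives
$$\langle v^{(N)}, B(v^{(N)}, v^{(N)})\rangle_{H^1} = -\int \theta\,(u\cdot\nabla)\theta\,dx = \tfrac12\int(\nabla\cdot u)\,\theta^2\,dx = 0,$$
since $\nabla\cdot\nabla^\perp = 0$. This is the physical-space manifestation of the symmetric cancellation already noted after \eqref{NonlinearityComponent}.

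With the nonlinear term eliminated, stopping at $\tau_R := \inf\{t \geq 0 : \|v^{(N)}(t)\|_{H^1}\geq R\}$ turns the local martingale into a true martingale, and taking expectations yields
$$\mathbb{E}\|v^{(N)}(\tau_R\wedge T)\|_{H^1}^2 \leq \mathbb{E}\|v^{(N)}(0)\|_{H^1}^2 + c_N T,$$
whose right-hand side is finite because $v^{(N)}(0) = \Pi_N \psi_0$ lives on finitely many Fourier modes each of finite $\rho$-variance. Chebyshev's inequality then gives $\mathbb{P}(\tau_R < T) \leq R^{-2}(\mathbb{E}\|v^{(N)}(0)\|_{H^1}^2 + c_N T)$, so letting $R\to\infty$ forces $\mathbb{P}(\tau_\infty^{(N)}\leq T) = 0$. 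The main obstacle is the cancellation of the nonlinear term in the $H^1$ balance; once it is in hand, the remainder is routine SDE theory after the frequency cutoff. Fortunately the cancellation is inherited directly from the divergence-free structure of the transport velocity in the streamline equation, a structure preserved by $\Pi_N$ because $\Pi_N$ commutes with all the Fourier multipliers appearing in $B$.
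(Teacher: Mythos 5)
Your proof is correct, and it follows the same basic strategy as the paper: an energy estimate in which the contribution of the truncated nonlinearity cancels, leaving only the dissipation, a stochastic integral, and a deterministic It\^o correction, which together preclude blow-up of the finite-dimensional system \eqref{FiniteDimensionalSystem}. The differences are worth noting. The paper applies It\^o's formula to $\tfrac12\|v^{(N)}\|_{L^2}^2$, asserts $\langle v^{(N)}, B^N(v^{(N)})\rangle_{L^2}=0$, and then controls $\sup_{t\le T}\|v^{(N)}(t)\|_{L^2}^4$ via Doob's maximal inequality, which gives a quantitative a priori moment bound on the running supremum. You instead work with $\|v^{(N)}\|_{H^1}^2$ (equivalently $\||D|v^{(N)}\|_{L^2}^2$, the $L^2$ norm of $\theta$), where the cancellation is exactly the divergence-free transport identity $\int\theta\,(u\cdot\nabla)\theta\,dx=0$; this is the same cancellation the paper itself verifies in the invariance section in the form $\langle|D|B^N(\psi),|D|\psi\rangle=0$, and it is the more natural choice since the invariant measure is built on the $H^1$ norm of the streamline, whereas the $L^2$ pairing of $\psi$ with $B(\psi,\psi)$ does not enjoy an equally transparent cancellation. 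You also make explicit two points the paper leaves implicit: local existence and the maximal interval for the locally Lipschitz finite-dimensional SDE, and the localization by the stopping times $\tau_R$ that turns the noise term into a true martingale before any moment bound is known, after which Chebyshev and $R\to\infty$ give non-explosion. The trade-off is that your argument yields only almost-sure global existence on $[0,T]$, while the paper's Doob-based version additionally records a bound on the expected supremum of the energy; both suffice for the proposition as stated.
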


\begin{proof}
We proceed by energy estimate and calculate using Ito's Lemma that
\begin{align*}
\frac12 d(\|v^{(N)}(t)\|_{L^2}^2) & = -\||D|^\delta v^{(N)}_k(t)\|_{L^2}^2 \, dt + \langle v^{(N)}_k(t), B_N(v^{(N)}_k(t))\rangle_{L^2} \, dt \\
& - \langle v^{(N)}_k(t), |D|^{\delta - 1}\Pi_N dW_t \rangle_{L^2} + \langle\langle |D|^{\delta - 1}W \rangle\rangle_t \, dt.
\end{align*}
As $\langle v^{(N)}_k(t), B_N(v^{(N)}_k(t)) \rangle_{L^2} = 0$, we have
\begin{align*}
\frac12 d(\|v^{(N)}(t)\|_{L^2}^2) \leq \langle v^{(N)}(t), |D|^{\delta - 1}\Pi_N dW_t \rangle_{L^2} + \sum_{|k| \leq N} |k|^{2\delta - 2} \, dt.
\end{align*}
Integrate in time on some interval $[0, t]$ on which $v^{(N)}$ exists; then we have
\begin{align*}
\frac12 \|v^{(N)}(t)\|_{L^2}^2 \leq \frac12 \|v^{(N)}(0)\|_{L^2}^2 + \int_0^t \langle v^{(N)}(\tau), |D|^{\delta - 1}\Pi_N dW_\tau \rangle_{L^2} + t\sum_{|k| \leq N} |k|^{2\delta - 2}.
\end{align*}
Then for $t \in [0, T]$ we have using Doob's martingale maximal inequality that
\begin{align*}
\mathbb{E}_\rho\left[\sup_{t \in [0, T]} \|v^{(N)}(t)\|_{L^2}^4\right] & \leq 2 \mathbb{E}_\rho[\|v^{(N)}(0)\|_{L^2}^4] + 4\mathbb{E}_\rho\left[\sup_{t \in [0, T]} \left(\int_0^t \sum_k v^{(N)}_k|k|^{\delta - 1} dW_t\right)^2\right] + 4N^{4\delta}T^2 \\
& \leq 2 \mathbb{E}_\rho[\|v^{(N)}(0)\|_{L^2}^4] + 16\mathbb{E}_\rho\left[\left(\int_0^T \sum_k v^{(N)}_k|k|^{\delta - 1} dW_t\right)^2\right] + 4N^{4\delta}T^2 \\
& = 2 \mathbb{E}_\rho[\|v^{(N)}(0)\|_{L^2}^4] + 16\int_0^T \mathbb{E}_\rho[\||D|^{\delta - 1} v^{(N)}(t)\|_{L^2}^2] \, dt + 4N^{4\delta}T^2 \\
& \leq 2 \mathbb{E}_\rho[\|v^{(N)}(0)\|_{L^2}^4] + 16\int_0^T \mathbb{E}_\rho[\|v^{(N)}(t)\|_{L^2}^2] \, dt + 4N^{4\delta}T^2,
\end{align*}
and this a priori bound is sufficient to extend the solution almost surely to $[0, T]$ for arbitrarily chosen $T$.
\end{proof}

\section{Infinitesimal Invariance of the Gaussian Measure under the Truncated Flow}

We would also like to show that each solution $\psi^{(N)}$ to \eqref{PsiNEquation} has stationary distribution $\rho$.  The full flow corresponding to the truncated mSQG above is
\begin{equation}
L_N \phi(\psi) := L_0 \phi(\psi) + \langle B^N(\psi), \nabla \phi(\psi) \rangle_{H^1},
\end{equation}
where $L_0$ is the untruncated Orenstein-Uhlenbeck generator
\begin{equation}
L_0\phi(\psi) := \sum_k |k|^{2\delta} (|k|^2 \psi_k \mathbb{D}_k \phi(\psi) + \mathbb{D}_{k}\mathbb{D}_{-k} \phi(\psi)).
\end{equation}

\begin{proposition}
The Gaussian measure $\rho$ is infinitesimally invariant under $L_N$.  That is, for every cylindrical test function $\psi$, $\mathbb{E}_\rho[L_N \phi (\psi)] = 0$.
\end{proposition}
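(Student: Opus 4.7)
The plan is to split $L_N = L_0 + L_B$, where $L_B \phi := \langle B^N(\psi),\nabla\phi(\psi)\rangle_{H^1}$ is the nonlinear transport piece, and to verify $\mathbb{E}_\rho[L_0\phi] = 0$ and $\mathbb{E}_\rho[L_B\phi] = 0$ separately. The common tool is the Gaussian integration-by-parts identity
\[
\int \mathbb{D}_k F(\psi)\,d\rho(\psi) \;=\; \int |k|^2 \psi_{-k}F(\psi)\,d\rho(\psi)
\]
recorded just before the definition of $L_N$. For the Ornstein--Uhlenbeck piece, I apply this IBP with $F = \mathbb{D}_{-k}\phi$ to rewrite $\int\mathbb{D}_k\mathbb{D}_{-k}\phi\,d\rho$ as $\int|k|^2\psi_{-k}\mathbb{D}_{-k}\phi\,d\rho$; reindexing $k\mapsto -k$ in the sum and using $|k|^{2\delta}=|-k|^{2\delta}$ balances this contribution against the drift term $\int|k|^{2\delta+2}\psi_k\mathbb{D}_k\phi\,d\rho$ so that $\mathbb{E}_\rho[L_0\phi]=0$. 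This is the standard fact that the Gaussian with covariance $|k|^{-2}$ is infinitesimally invariant for the diagonal Ornstein--Uhlenbeck dynamics $d\psi_k = -|k|^{2\delta}\psi_k\,dt + |k|^{\delta-1}\,dW_k$ in each Fourier mode; cylindricality of $\phi$ reduces every sum to a finite one, so convergence is never an issue.

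For the nonlinear piece I read $\langle B^N(\psi),\nabla\phi(\psi)\rangle_{H^1}$ as the directional derivative $\sum_k B^N_k(\psi)\,\mathbb{D}_k\phi(\psi)$, consistent with viewing $\nabla\phi$ as the $H^1$ Cameron--Martin gradient natural to $\rho$. The Leibniz identity $B^N_k\mathbb{D}_k\phi = \mathbb{D}_k(B^N_k\phi) - (\mathbb{D}_k B^N_k)\phi$ combined with the Gaussian IBP then produces
\[
\mathbb{E}_\rho[L_B\phi] \;=\; \mathbb{E}_\rho\!\left[\phi\,\sum_k |k|^2\psi_{-k}B^N_k(\psi)\right] - \mathbb{E}_\rho\!\left[\phi\,\sum_k \mathbb{D}_k B^N_k(\psi)\right].
\]
Since $\phi$ is arbitrary, it suffices to verify the two pointwise identities $\sum_k |k|^2\psi_{-k}B^N_k(\psi) = 0$ and $\sum_k \mathbb{D}_k B^N_k(\psi) = 0$. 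The second is immediate: computing $\mathbb{D}_k B^N_k$ from the explicit formula for $B^N_k$ pins $h_2 = 0$ in the defining sum, and $\alpha_{k,0,k} = |k|^{-1}(k^\perp\cdot 0)(|k|-0)=0$, so each term vanishes individually.

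The first identity is the conservation of $\|\psi\|_{H^1}^2$ (equivalently $\|\theta\|_{L^2}^2$) along the truncated flow $\dot\psi = B^N(\psi)$. Setting $h_3 = -k$, it takes the symmetric triple-sum form
\[
\sum_{\substack{h_1+h_2+h_3 = 0\\|h_1|,|h_2|,|h_3|\le N}} |h_3|\,(h_1^\perp\cdot h_2)(|h_1|-|h_2|)\,\psi_{h_1}\psi_{h_2}\psi_{h_3} = 0,
\]
and I would prove it by symmetrizing the coefficient over $S_3$ acting on $(h_1,h_2,h_3)$. Using the relations $h_1^\perp\cdot h_3 = -(h_1^\perp\cdot h_2)$ and $h_2^\perp\cdot h_3 = h_1^\perp\cdot h_2$, which follow from the constraint $h_1+h_2+h_3=0$, the cyclic sum of the coefficient collapses to
\[
(h_1^\perp\cdot h_2)\big[|h_3|(|h_1|-|h_2|) + |h_1|(|h_2|-|h_3|) + |h_2|(|h_3|-|h_1|)\big],
\]
which vanishes identically by telescoping in the pairwise products $|h_i||h_j|$; the odd permutations yield the same telescoping after an extra sign flip. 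The truncation $\Pi_N$ imposes the \emph{symmetric} constraint $|h_i|\le N$ for $i=1,2,3$, so the $S_3$-action respects the cutoff and no boundary terms arise. This symmetrization is the only step requiring real algebraic care; combined with the trivial divergence-free identity, it yields $\mathbb{E}_\rho[L_N\phi]=0$ as claimed.
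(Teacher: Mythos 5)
Your proof is correct and follows essentially the same route as the paper: split $L_N$ into the Ornstein--Uhlenbeck part and the nonlinear drift, kill the former by Gaussian integration by parts, and reduce the latter via integration by parts to the two pointwise cancellations $\sum_k \mathbb{D}_k B^N_k(\psi) = 0$ and $\sum_k |k|^2 \psi_{-k} B^N_k(\psi) = 0$, which is exactly the paper's pair of identities $\mathrm{div}(B_N)=0$ and $\langle |D|B_N(\psi), |D|\psi\rangle = 0$. The only difference is level of detail: the paper asserts these cancellations, whereas you verify them explicitly (termwise vanishing via $\alpha_{k,0,k}=0$, and the $S_3$-symmetrization with the telescoping of $|h_i||h_j|$, correctly noting that the cutoff $|h_1|,|h_2|,|h_3|\le N$ is symmetric so the symmetrization is legitimate).
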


\begin{proof}
It suffices to show that $\mathbb{E}_\rho(L_0\phi(\psi)) = 0$ and $\mathbb{E}_\rho(\langle B_n(\psi), \nabla \phi(\psi) \rangle) = 0$.  

First we show that $\mathbb{E}_\rho(L_0\phi(\psi)) = 0$.  Without loss of generality we may take $\phi = \phi(\psi_1, \ldots, \psi_m) = \phi(\psi^{(m)})$; then we have with integration by parts that
\begin{align*}
\mathbb{E}_\rho(L_0\phi(\psi)) & = \int L_0\phi(\psi) \, d\rho_m(\psi^{(m)}) \\
& = \int \sum_{|k| \leq m} |k|^{2\delta} \left( |k|^2 \psi_k \mathbb{D}_k \phi(\psi) + \mathbb{D}_{k}\mathbb{D}_{-k} \phi(\psi) \right) \, e^{-\frac12\sum_{|j| \leq m} |j|^2\,|\psi_j|^2} d\psi_1 \cdots d\psi_m \\
& = \int \sum_{|k| \leq m} |k|^{2\delta} \left( |k|^2 \psi_k \mathbb{D}_k \phi(\psi) + (-|k|^2\psi_k) \mathbb{D}_{k}\phi(\psi) \right) \, e^{-\frac12\sum_{|j| \leq m} |j|^2\,|\psi_j|^2} d\psi_1 \cdots d\psi_m \\
& = 0.
\end{align*}

Next we show that $\mathbb{E}_\rho(\langle B_N(\psi), \nabla \phi(\psi) \rangle) = 0$.  We have
\begin{equation}
\int \text{div}_\rho(B_N(\psi))\phi(\psi) \, d\rho_m(\psi^{(m)}) = \int \langle B_N(\psi), \nabla \phi(\psi) \rangle_{H^1} \, d\rho_m(\psi^{(m)})
\end{equation}
by the definition of the cylindrical Gaussian function $\rho$.  We calculate explicitly that
\begin{align*}
\int \langle B_N(\psi), \nabla \phi(\psi) \rangle_{H^1} \, d\rho_m(\psi^{(m)})
& = \int \langle |D|^2 B_N(\psi), \nabla \phi(\psi) \rangle_{L^2} \, d\rho_m(\psi^{(m)}) \\
& = \int \langle B_N(\psi), |D|^2 \nabla \phi(\psi) \rangle_{L^2} \, e^{-\frac12\sum_{|j| \leq m} |j|^2\,|\psi_j|^2} d\psi_1\cdots d\psi_m \\
& = \int \langle B_N(\psi) e^{-\frac12\sum_{|j| \leq m} |j|^2\,|\psi_j|^2}, \nabla(|D|^2 \phi(\psi)) \rangle_{L^2} \, d\psi_1\cdots d\psi_m \\
& = \int \text{div}\left(B_N(\psi) e^{-\frac12\sum_{|j| \leq m} |j|^2\,|\psi_j|^2}\right) |D|^2 \phi(\psi) \, d\psi_1\cdots d\psi_m,
\end{align*}
where $\text{div}$ now denotes the divergence with respect to the standard inner product on $\mathbb{R}^{(2N + 1)^2}$.  But now
\begin{align*}
& \;\quad \text{div}\left(B_N(\psi) e^{-\frac12\sum_{|j| \leq m} |j|^2\,|\psi_j|^2}\right) \\
& = \text{div}(B_N(\psi)) e^{-\frac12\sum_{|j| \leq m} |j|^2\,|\psi_j|^2} + B_N(\psi) \cdot \nabla\left(e^{-\frac12\sum_{|j| \leq m} |j|^2\,|\psi_j|^2}\right) \\
& = \text{div}(B_N(\psi)) e^{-\frac12\sum_{|j| \leq m} |j|^2\,|\psi_j|^2} - \langle B_N(\psi), |k|^2 \psi \rangle \left(e^{-\frac12\sum_{|j| \leq m} |j|^2\,|\psi_j|^2}\right).
\end{align*}
This last expression vanishes entirely, as the first term vanishes since $\text{div}(B_N(\psi)) = 0$, and the second term vanishes since $\langle |D|B_N(\psi), |D|\psi \rangle = 0$.
\end{proof}

\section{Definition of Energy Solutions and Outline of Argument}

We will use the above sequence of approximate solutions $(\psi^{(N)})$ to construct candidate solutions to the streamline SQG equation \eqref{SQGstreamline}.  The sense of solution that we will pursue here is the \textit{energy solution} used in \cite{gj}.  Such a solution is a particular kind of process called a \textit{controlled process}, so named since such processes can be regarded as a perturbations of the Orenstein-Uhlenbeck process generated by the linearization of \eqref{SQGstreamline}.

\begin{definition}\label{ControlledProcessDefinition}
(Controlled processes to SQG).  (c.f. \cite{gj, gpbrazil})  We call $\psi$ a \textit{controlled process} on $[0, T]$ to \eqref{SQGstreamline} if it satisfies the following properties:
\begin{itemize}
\item[(1)]{The distribution of $\psi_t$ is the Gaussian measure $\rho$.}
\item[(2)]{There exists a stochastic process $\mathcal{A} \in C([0, T] : \mathscr{S}^\prime)$ of zero quadratic variation for which $\mathcal{A}_0 = 0$ at time $t = 0$ and for which the process $M^+$ defined for each cylindrical test function $\phi$ by
\begin{equation}\label{ForwardMartingale}
M_t^+(\phi) := \psi_t(\phi) - \psi_0(\phi) - \int_0^t \psi_\tau(-|D|^{2\delta}\phi) \, d\tau - \mathcal{A}_t(\phi)
\end{equation}
is a martingale with respect to the filtration generated by $\psi$ and with quadratic variation $[M^+(\phi)]_t = t\|\phi\|_{H^{1 - \delta}}^2$.}
\item[(3)]{If $\psi_t$ and $\mathcal{A}_t$ in \eqref{ForwardMartingale} are replaced by the time reversed processes $
\psi_{T - t}$ and $-\mathcal{A}_{T - t}$ respectively, then the corresponding process $M^-$ is also a martingale with respect to the filtration generated by $u_{-t}$ with quadratic variation $[M^-(\phi)]_t = t\|\phi\|_{H^{1 - \delta}}^2$.}
\end{itemize}
We denote the set of all controlled processes by $\mathscr{R}_\delta(0, T)$.
\end{definition}

Using the sequence of approximate solutions from the last section, we will show in the sequel that for any $\delta > 0$, the sequence
$$\int_0^t B^N(\psi^{(N)}_\tau, \psi^{(N)}_\tau)\, d\tau$$
converges in a suitable Fourier-Lebesgue space almost surely as $N \to \infty$.\footnote{Although it might appear that this limit depends on the particular method of frequency truncation we have taken here, it is not difficult to show that the limit resulting from a sequence of truncations consisting of any compact exhaustion of $\mathbb{Z}_0^2$ agree.  Hence any reasonable Galerkin approximation to \eqref{SQGstreamline} recovers the same notion of energy solution.}  We denote this limit by $$\int_0^t B(\psi_\tau, \psi_\tau)\, d\tau.$$ Then we can make precise our notion of energy solution:

\begin{definition}\label{EnergySolutionDefinition}
(Energy Solution).  (c.f. \cite{gj, gpbrazil})  A controlled process $\psi \in \mathscr{R}_\delta(0, T)$ is an \textit{energy solution} to \eqref{SQGstreamline} if almost surely the following compatibility condition holds for all cylindrical test functions $\phi$:
\begin{equation}
\mathcal{A}_t(\phi) = \left\langle \int_0^t B(\psi_\tau, \psi_\tau)\,d\tau, \phi \right\rangle.
\end{equation}
\end{definition}

\section{The Key Cancellation of Controlled Processes}

The controlled processes ansatz allows us to reduce estimates of the time-integral of the nonlinearity of \eqref{SQGstreamline} to estimates on martingales, for which we have the tools to provide better estimates.

Let $h : [0, T] \times \Pi_N(L^2) \to \mathbb{C}$ be a smooth cylindrical function which is intended to stand in for functions of the frequency truncated nonlinearities appearing in \eqref{PsiNEquation}.  

Suppose that $\psi \in \mathscr{R}_\delta(0, T)$ is given.  We would like to derive an expression that expands the time-integrated nonlinearity using the definition of controlled processes.  To do so, we use Ito's Lemma and property (2) of the Definition \ref{ControlledProcessDefinition} of controlled processes allows us to express $h(\Pi_N \psi_t, t)$ as 
\begin{equation}
h(\Pi_N \psi_t, t)= h(\Pi_N \psi_0, 0) + \int_0^t (\partial_\tau + L_0^{(N)})h(\Pi_N \psi_\tau, \tau) \, d\tau + \int_0^t \mathbb{D}h(\Pi_N \psi_\tau, \tau) \cdot d(\Pi_N\mathcal{A}_\tau) + \mathcal{M}^+_t,
\end{equation}
where here
\begin{equation}
L_0^{(N)}\phi(\psi) := \sum_{|k| \leq N} |k|^{2\delta} (|k|^2 \psi_k \mathbb{D}_k \phi(\psi) + \mathbb{D}_{k}\mathbb{D}_{-k} \phi(\psi)),
\end{equation}
where the martingale $\mathcal{M}^+$ has quadratic variation
\begin{equation}
[\mathcal{M}^+]_t = \int_0^t (\mathcal{E}^\delta)^{(N)}(h(\cdot, \tau))(\Pi_N\psi_\tau)\,d\tau,
\end{equation}
and where
\begin{equation}
(\mathcal{E}^\delta)^{(N)}(\phi)(\psi) := \frac12 \sum_{|k| \leq N} |k|^{2\delta} |\mathbb{D}_k \phi(\psi)|^2.
\end{equation}
Similarly, using property (3) of the definition of controlled processes and proceeding in the same way, we arrive at the relation
\begin{align*}
h(\Pi_N \psi_{T - t}, T - t) & = h(\Pi_N \psi_T, T) + \int_0^t (-\partial_\tau + L_0^{(N)})h(\Pi_N \psi_{T - \tau}, T - \tau) \, d\tau \\
& \qquad - \int_0^t \mathbb{D}h(\Pi_N \psi_{T - \tau}, T - \tau) \cdot d(\Pi_N\mathcal{A}_{T - \tau}) + \mathcal{M}^-_t,
\end{align*}
where
\begin{equation}
[\mathcal{M}^-]_t = \int_0^t (\mathcal{E}^\delta)^{(N)}(h(\cdot, T - \tau))(\Pi_N\psi_{T - \tau})\,d\tau.
\end{equation}
We can express the time derivative of the nonlinearity in terms of the above martingales alone by writing
\begin{equation}\label{KeyRepresentation}
2\int_0^t L_0^{(N)} h(\Pi_N\psi_\tau, \tau) \, d\tau = -\mathcal{M}^+_t + \mathcal{M}^-_{T - t} - \mathcal{M}^-_T.
\end{equation}
This is the key representation which permits us control of the nonlinearity using martingale estimates.

\section{Martingale Estimates and the Ito Trick for Controlled Processes}

We now show how martingale estimates permit good control over the time integral of the nonlinearity.

\begin{proposition}
(The ``Ito Trick'').  Let $h : [0, T] \times \Pi_N(L^2)$ be a cylindrical function and let $\psi \in \mathscr{R}_\delta(0, T)$ be given.  Then for any $p \geq 1$,
\begin{equation}
\left\|\sup_{t\in[0, T]} \left|\int_0^t L_0h(\Pi_n\psi_\tau, \tau)\, d\tau \right|\right\|_{L^p(\rho)} \leq C_pT^\frac12\sup_{t \in [0, T]}\|\mathcal{E}^\delta(h(\cdot, t))\|^\frac12_{L^\frac{p}{2}(\rho)}.
\end{equation}
If in addition $h$ takes the form $h(\psi, t) = e^{a(T - t)}\tilde{h}(\psi)$ for some $a \in \mathbb{R}$, then the above estimate can be improved to read
\begin{equation}
\left\|\sup_{t\in[0, T]} \left|\int_0^t e^{a(T - \tau)}L_0\tilde{h}(\Pi_n\psi_\tau)\, d\tau \right|\right\|_{L^p(\rho)} \leq C_p\left(\frac{1 - e^{2aT}}{2a}\right)^\frac12 \sup_{t \in [0, T]}\|\mathcal{E}^\delta(h(\cdot, t))\|^\frac12_{L^\frac{p}{2}(\rho)}.
\end{equation} 
\end{proposition}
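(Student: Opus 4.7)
The plan is to reduce the bound on the time integral of $L_0 h$ to martingale estimates via the key identity \eqref{KeyRepresentation}, which expresses
$$2\int_0^t L_0^{(N)}h(\Pi_N\psi_\tau,\tau)\,d\tau = -\mathcal{M}^+_t + \mathcal{M}^-_{T-t} - \mathcal{M}^-_T.$$
Since $h$ is cylindrical, $L_0$ and $L_0^{(N)}$ agree when $N$ exceeds the largest frequency on which $h$ depends, so it is enough to control each of the three martingale terms on the right. I apply the triangle inequality twice (first pointwise in $\omega$, then in $L^p(\rho)$) to reduce to bounding $\|\sup_t|\mathcal{M}^+_t|\|_{L^p(\rho)}$ and its time-reversed analogue $\|\sup_t|\mathcal{M}^-_{T-t}|\|_{L^p(\rho)}$ (the $|\mathcal{M}^-_T|$ term is dominated by the latter).

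Next, apply the Burkholder-Davis-Gundy inequality to each martingale separately:
$$\|\sup_{t\in[0,T]}|\mathcal{M}^\pm_t|\|_{L^p(\rho)} \leq C_p\,\|[\mathcal{M}^\pm]_T\|_{L^{p/2}(\rho)}^{1/2}.$$
The quadratic variations are given in the previous section by $\int_0^T(\mathcal{E}^\delta)^{(N)}(h(\cdot,\tau))(\Pi_N\psi_\tau)\,d\tau$. For $p\geq 2$, I use the power-mean inequality to pull the $p/2$-th power inside:
$$\mathbb{E}\!\left[\left(\int_0^T\!(\mathcal{E}^\delta)^{(N)}(h(\cdot,\tau))(\Pi_N\psi_\tau)\,d\tau\right)^{\!p/2}\right] \leq T^{p/2-1}\!\int_0^T\! \mathbb{E}\bigl[(\mathcal{E}^\delta)^{(N)}(h(\cdot,\tau))(\Pi_N\psi_\tau)^{p/2}\bigr]\,d\tau.$$
At this point I invoke property $(1)$ of Definition \ref{ControlledProcessDefinition}: the law of $\psi_\tau$ is the Gaussian $\rho$ for every $\tau$, so each expectation equals $\|(\mathcal{E}^\delta)^{(N)}(h(\cdot,\tau))\|_{L^{p/2}(\rho)}^{p/2}$, which is bounded by $\|\mathcal{E}^\delta(h(\cdot,\tau))\|_{L^{p/2}(\rho)}^{p/2}$. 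A crude supremum in $\tau$ provides the remaining factor of $T$, and taking the $(p/2)$-th root yields the advertised $T^{1/2}$ prefactor. For $p\in[1,2)$ I first run the argument at $p=2$ and use $\|\cdot\|_{L^p(\rho)}\leq\|\cdot\|_{L^2(\rho)}$, which costs only a constant since $\rho$ is a probability measure.

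For the sharpened exponential estimate, I use that $\mathbb{D}_k$ is linear so that $\mathcal{E}^\delta$ is quadratic in $h$; hence $h(\psi,t) = e^{a(T-t)}\tilde h(\psi)$ gives $\mathcal{E}^\delta(h(\cdot,\tau))(\psi) = e^{2a(T-\tau)}\mathcal{E}^\delta(\tilde h)(\psi)$. The scalar factor can be pulled out of the time integral defining $[\mathcal{M}^\pm]_T$, and the elementary evaluation $\int_0^T e^{2a(T-\tau)}\,d\tau = (e^{2aT}-1)/(2a)$ replaces the crude $T$ by the stated expression. Everything else in the argument is unchanged.

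The main obstacle is the bookkeeping for the time-reversed martingale $\mathcal{M}^-$: one must verify that BDG applies with the same constant on the reversed filtration and that its quadratic variation admits the same structural bound. This is precisely guaranteed by property $(3)$ of Definition \ref{ControlledProcessDefinition}, which asserts that $\mathcal{M}^-$ is a martingale in its own right (for the filtration of $\psi_{T-t}$) with quadratic variation of the same form as $\mathcal{M}^+$; once this is noted the two terms are treated identically. Stationarity under $\rho$ is used twice, once to identify $\mathbb{E}[\cdot(\Pi_N\psi_\tau)]$ with the $\rho$-expectation, and implicitly to ensure the time-reversed process has the same marginals.
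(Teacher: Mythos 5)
Your proof is correct and follows essentially the same route as the paper: the representation \eqref{KeyRepresentation}, the Burkholder--Davis--Gundy inequality applied to $\mathcal{M}^{\pm}$, identification of the quadratic variations with $\int_0^t \mathcal{E}^\delta(h(\cdot,\tau))(\psi_\tau)\,d\tau$, and stationarity of $\psi_\tau$ under $\rho$ to convert the expectation into the $L^{p/2}(\rho)$-norm. The only differences are cosmetic: you use a power-mean/Jensen step where the paper uses Minkowski's integral inequality, and you are in fact somewhat more careful than the paper in handling $1\le p<2$ and in tracking the factor $e^{2a(T-\tau)}$ coming from the quadratic dependence of $\mathcal{E}^\delta$ on $h$, which matches the stated constant.
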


\begin{proof}
By using the key representation \eqref{KeyRepresentation}, the Burkholder-Davis-Gundy inequality, and our expressions for the quadratic variations of the martingales $\mathcal{M}^+$ and $\mathcal{M}^-$ we find that
\begin{align*}
\left\|\sup_{t\in[0, T]} \left|\int_0^t 2L_0h(\Pi_n\psi_\tau, \tau)\, d\tau \right|\right\|_{L^p(\rho)}& \leq \left\|\sup_{t\in[0, T]}|\mathcal{M}^+_t|\right\|_{L^p(\rho)} + 2 \left\|\sup_{t\in[0, T]}|\mathcal{M}^-_t|\right\|_{L^p(\rho)} \\
& \leq C_p\left(\|[M^+]_t\|^\frac12_{L^\frac{p}{2}(\rho)} + \|[M^+]_t\|^\frac12_{L^\frac{p}{2}(\rho)}\right) \\
& \leq C_p\left\|\int_0^t \mathcal{E}^\delta(h(\cdot, \tau))(\psi_\tau) \,d\tau\right\|^\frac12_{L^\frac{p}{2}(\rho)} \\
& \leq C_p\left(\int_0^t \|\mathcal{E}^\delta(h(\cdot, \tau))(\psi_\tau)\|_{L^\frac{p}{2}(\rho)} \,d\tau\right)^\frac12.
\end{align*}
For the more general estimate we simply estimate so that $$\left(\int_0^t \|F(\tau)\| d\tau\right)^\frac12 \leq T^\frac12 \sup_{[0, T]} \|F\|,$$ and in the more specific case $F(\psi, t) = e^{a(T - t)}\tilde{F}(\psi)$ we estimate as in $$\left(\int_0^t e^{a(T - \tau)}\|\tilde{F}(\tau)\| d\tau\right)^\frac12 \leq \left(\int_0^t e^{a(T - \tau)}\,d\tau\right)^\frac12 \sup_{[0, T]} \|\tilde{F}\|.$$
\end{proof}

\section{Application to Estimating the SQG Nonlinearity}

In the previous section we have shown that, by restricting to studying controlled solutions, we may control quantities of the form $L_0 h(\psi)$ in terms of the expectation of $\mathcal{E}^\delta(h(\psi))$.  This motivates us to search for some quantity $H^N$ for which we can write $L_0 H^N = B^N$.  Since such an $H^N$ satisfies a Poisson-like equation with forcing term $B^N$, we expect $H^N$ to be more regular than $B^N$.  Motivated by \cite{gj}, we choose

\begin{equation}
H^N(\psi) = -\int_0^\infty B^N(e^{-|D|^{2 + 2\delta}t} \psi) \, dt.
\end{equation}

We now verify that with this definition, we have indeed that

\begin{proposition}
$L_0 H^N(\psi) = B^N(\psi)$.
\end{proposition}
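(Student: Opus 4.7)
The plan is a direct Fourier-mode computation, exploiting that under the truncation $\Pi_N$ each component $H^N_k$ is a polynomial in only finitely many of the variables $\psi_h$. I would first carry out the time integral using $(e^{-|D|^{2+2\delta}t}\psi)_h = e^{-|h|^{2+2\delta}t}\psi_h$ to obtain the explicit closed form
\begin{equation*}
H^N_k(\psi) = -\sum_{h_1+h_2=k} \frac{\alpha_{h_1,h_2,k}}{|h_1|^{2+2\delta}+|h_2|^{2+2\delta}}\,\psi_{h_1}\psi_{h_2},
\end{equation*}
with the sum restricted by the frequency truncations $|h_1|,|h_2|,|k|\le N$.

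I would then apply $L_0$ componentwise. The symmetry $\alpha_{h_1,h_2,k}=\alpha_{h_2,h_1,k}$ combines the two equal contributions to the derivative to give
\begin{equation*}
\mathbb{D}_j H^N_k(\psi) = -\frac{2\,\alpha_{j,k-j,k}\,\psi_{k-j}}{|j|^{2+2\delta}+|k-j|^{2+2\delta}},
\end{equation*}
so the drift piece of $L_0 H^N_k$ reads $\sum_j |j|^{2+2\delta}\psi_j\mathbb{D}_j H^N_k$. Symmetrizing the dummy indices $j \leftrightarrow h := k-j$ (using the symmetry of $\alpha$ once more), the rational weights $|j|^{2+2\delta}/(|j|^{2+2\delta}+|h|^{2+2\delta})$ and $|h|^{2+2\delta}/(|j|^{2+2\delta}+|h|^{2+2\delta})$ pair to sum to $1$, and the drift contribution collapses exactly to $B^N_k(\psi)$ (with signs balanced by the minus sign in the definition of $H^N$).

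Third, I would verify that the trace part $\sum_j |j|^{2\delta}\mathbb{D}_j\mathbb{D}_{-j} H^N_k$ vanishes identically. A nonzero contribution to $\mathbb{D}_j\mathbb{D}_{-j}$ applied to a monomial $\psi_{h_1}\psi_{h_2}$ requires $\{h_1,h_2\}=\{j,-j\}$, which forces $k = h_1+h_2 = 0$; since we work throughout with mean-zero fields, this contribution is absent for every admissible $k \ne 0$. Conceptually this is the statement that the Mehler-type Gaussian trace correction in the action of the full Ornstein-Uhlenbeck semigroup $P_t$ on the centered quadratic functional $B^N$ disappears, so that $P_t B^N(\psi) = B^N(e^{-|D|^{2+2\delta}t}\psi)$ and $H^N = -\int_0^\infty P_t B^N\,dt$ is a genuine Poisson inversion of $L_0$. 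I expect the vanishing of the cross-derivative term, or equivalently the vanishing of the trace correction, to be the only conceptually delicate point; the combinatorial symmetrization of the drift is handled cleanly by the symmetry of $\alpha$, and every other manipulation is legitimate finite-sum bookkeeping because $\Pi_N$ reduces the identity to one between polynomials in finitely many variables.
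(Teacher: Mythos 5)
Your proposal is correct and is essentially the paper's own argument: the paper also verifies the Poisson equation mode by mode, except that it keeps the $t$-integral in place (symmetrizing the drift term into $-\tfrac{d}{dt}B^N(e^{-|D|^{2+2\delta}t}\psi)$ and telescoping) where you integrate in $t$ first and symmetrize so the rational weights $|j|^{2+2\delta}/(|j|^{2+2\delta}+|k-j|^{2+2\delta})$ pair to $1$, and it kills the trace term by an explicit computation ending in $k^\perp\cdot k=0$ where you use the index-pairing/mean-zero observation --- both are valid, cosmetic variants. One caveat you inherit from the paper rather than create: with the generator literally as printed (drift $+|k|^{2+2\delta}\psi_k\mathbb{D}_k$) your symmetrized drift evaluates to $-B^N_k$, and the stated identity $L_0H^N=B^N$ requires the standard Ornstein--Uhlenbeck sign convention (drift $-|k|^{2+2\delta}\psi_k\mathbb{D}_k$), a sign the paper itself silently flips in its own display, so your ``signs balanced by the minus in $H^N$'' claim is right in substance but worth making explicit against a fixed convention.
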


\begin{proof}
We first check the identity $\mathbb{D}_k \mathbb{D}_{-k} B^N(e^{-|D|^{2 + 2\delta}t} \psi) = 0$ as follows:  for $|k| \leq N$ we have
\begin{align*}
& \quad\; \mathbb{D}_k \mathbb{D}_{-k} B^N(e^{-|D|^{2 + 2\delta}t} \psi) \\
& = -\mathbb{D}_k \mathbb{D}_{-k} \Pi_N|D|^{-1}(\nabla^\perp \exp(-|D|^{2 + 2\delta}t) \psi^{(N)} \cdot \nabla)|D| \exp({-|D|^{2 + 2\delta}t}) \psi^{(N)} \\
& = \mathbb{D}_k \Biggl( |D|^{-1}((ik)^\perp \exp(-|k|^{2 + 2\delta}t) e_{-k} \cdot \nabla)|D| \exp({-|D|^{2 + 2\delta}t}) \psi^{(N)} \\
& \qquad \; + |D|^{-1}(\nabla^\perp \exp(-|D|^{2 + 2\delta}t) \psi^{(N)} \cdot ik)|k| \exp({-|k|^{2 + 2\delta}t}) e_{-k} \Biggr) \\
& = |D|^{-1}((ik)^\perp \exp(-|k|^{2 + 2\delta}t) e_{-k} \cdot (ik))|k| \exp({-|k|^{2 + 2\delta}t}) e_k \\
& \; + |D|^{-1}((ik)^\perp \exp(-|k|^{2 + 2\delta}t) e_k \cdot (ik))|k| \exp({-|k|^{2 + 2\delta}t}) e_{-k} \\
& = 2|D|^{-1}|k|\exp(-2|k|^{2 + 2\delta}t)(k^\perp \cdot k) \\
& = 0,
\end{align*}
and for $|k| > N$ the expression is 0 immediately.
This simplifies our verification of the fact that $L_0 H^N = B^N$.  Indeed, writing $S_\delta(t) = e^{-|D|^{2 + 2\delta}t}$, we have
\begin{align*}
& \quad \; L_0 H^N(\psi) \\
& = -\int_0^\infty L_0 B^N(S_\delta(t)\psi) \, dt \\
& = \int_0^\infty \sum_{|k| \leq N} |k|^{2 + 2\delta} S_\delta(t)\psi_k \mathbb{D}_k B^N(S_\delta(t)\psi)  \, dt \\
& = \int_0^\infty \sum_{|k| \leq N} |k|^{2 + 2\delta} S_\delta(t)\psi_k \sum_{|j| \leq N} \mathbb{D}_k B_j(S_\delta(t)\psi) e_j \, dt\\
& = \int_0^\infty \sum_{|k| \leq N} |k|^{2 + 2\delta} S_\delta(t)\psi_k \sum_{|j| \leq N} \sum_{|j_1|, |j_2| \leq N}^{j_1 + j_2 = j} \alpha_{j_1, j_2, j} \mathbb{D}_k(S_\delta(t)\psi_{j_1} S_\delta(t)\psi_{j_2}) e_j \, dt\\
& = \int_0^\infty \sum_{|k| \leq N} |k|^{2 + 2\delta} S_\delta(t)\psi_k \sum_{|j| \leq N} 2 \alpha_{k, 
j - k, j} S_\delta(t) \psi_{j - k}^{(N)} e_j \, dt \\
& = \int_0^\infty \sum_{|j| \leq N} \left( \sum_{|k| \leq N} 2|k|^{2 + 2\delta} \alpha_{k, j - k, j} S_\delta(t)\psi_k^{(N)} S_\delta(t)\psi_{j - k}^{(N)}\right) e_j \, dt\\
\end{align*}
\begin{align*}
& = -\int_0^\infty \sum_{|j| \leq N} \Biggl( \sum_{|k| \leq N} \alpha_{k, j - k, j} (-|k|^{2 + 2\delta}) S_\delta(t)\psi_k^{(N)} S_\delta(t)\psi_{j - k}^{(N)} \\
& \hspace{3cm} + \sum_{|k| \leq N} \alpha_{k, j - k, j} S_\delta(t)\psi_k^{(N)} (-|j - k|^{2 + 2\delta}) S_\delta(t)\psi_{j - k}^{(N)} \Biggr) e_j \, dt\\
& = -\int_0^\infty \frac{d}{dt} B^N(S_\delta(t)\psi) \, dt \\
& = B^N(S_\delta(0)\psi) - \lim_{t \to \infty}B^N(S_\delta(t)\psi) \\
& = B^N(\psi).
\end{align*}
We note that in in the above derivation we have used the identity
\begin{equation*}
\mathbb{D}_k e^{-|D|^{2 + 2\delta}t}\psi_k = e^{-|D|^{2 + 2\delta}t}\mathbb{D}_k \psi_k = e^{-|D|^{2 + 2\delta}t}1 = 1,
\end{equation*}
which holds in the sense of tempered distributions.
\end{proof}

We use this relation to provide exponential type bounds on the Malliavin-Sobolev norm appearing in the Ito trick.

\begin{proposition}\label{ExponentialBounds}
Fix $N > 1$ and $k$ with $|k| \leq N$.  There is a $\lambda > 0$ sufficiently small depending on $\delta > 0$ so that
$$\mathbb{E}_\rho \exp\left(\lambda |k|^{2\delta} \mathcal{E}^\delta(H_k^{N, \pm})(\psi)\right)$$ is uniformly bounded in $k$ and $N$.  Moreover if $M < N$, then there is a $\lambda > 0$ sufficiently small depending on $\delta > 0$ so that $$\mathbb{E}_\rho \exp\left(\lambda |k|^{2\delta} \mathcal{E}^\delta(H_k^{N, \pm} - H_k^{M, \pm})(\psi)\right)$$ is uniformly bounded in $k$, $N$, and $M$.
\end{proposition}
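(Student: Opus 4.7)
My plan is to exploit that $H_k^N$ is a quadratic polynomial in the Gaussian modes $\psi_j$, so $\mathcal{E}^\delta(H_k^N)$ is a diagonal quadratic form whose exponential moment factorizes into an explicit product of chi-squared-type reciprocals. First I would evaluate the integral in the definition of $H^N$ mode-by-mode, using that the semigroup $S_\delta(t)$ acts on the $h$th Fourier coefficient by multiplication by $e^{-|h|^{2+2\delta}t}$. This yields
\[
H_k^N(\psi) = -\sum_{h_1+h_2=k}\mathbf{1}(|h_1|,|h_2|,|k|\leq N)\,\beta^N_{h_1,h_2,k}\,\psi_{h_1}\psi_{h_2},\qquad \beta^N_{h_1,h_2,k} := \frac{\alpha_{h_1,h_2,k}}{|h_1|^{2+2\delta}+|h_2|^{2+2\delta}}.
\]
Differentiating with respect to $\psi_l$ and using the symmetry $\beta^N_{h_1,h_2,k}=\beta^N_{h_2,h_1,k}$ inherited from $\alpha$, one finds $\mathbb{D}_l H_k^N(\psi) = -2\beta^N_{l,k-l,k}\psi_{k-l}$, and therefore
\[
|k|^{2\delta}\mathcal{E}^\delta(H_k^N)(\psi) = 2|k|^{2\delta}\sum_l |l|^{2\delta}\,|\beta^N_{l,k-l,k}|^2\,|\psi_{k-l}|^2,
\]
which is a diagonal quadratic form $\sum_j a^N_j(k)|\psi_j|^2$ in the complex Gaussian modes $\{\psi_j\}$ subject to $\psi_{-j}=\overline{\psi_j}$.

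Next I would compute the exponential moment directly. Under $\rho$, the real and imaginary parts of $\psi_j$ for $j$ in a half-lattice $\Lambda_+\subset\mathbb{Z}_0^2$ are independent Gaussians with $\mathbb{E}|\psi_j|^2 = |j|^{-2}$, giving $\mathbb{E}_\rho\exp(\mu|\psi_j|^2) = (1-\mu/|j|^2)^{-1}$ for $\mu<|j|^2$. Pairing $j$ with $-j$ yields
\[
\mathbb{E}_\rho\exp\!\left(\lambda|k|^{2\delta}\mathcal{E}^\delta(H_k^N)\right) = \prod_{j\in\Lambda_+}\bigl(1-\lambda A^N_j(k)\bigr)^{-1},\qquad A^N_j(k) := \frac{a^N_j(k)+a^N_{-j}(k)}{|j|^2}.
\]
This product is uniformly bounded in $k$ and $N$ provided (i) $\sup_j A^N_j(k)\leq C(\delta)$ so that for $\lambda$ small every factor lies in $[1,2]$, and (ii) $\sum_j A^N_j(k) = \mathbb{E}_\rho[|k|^{2\delta}\mathcal{E}^\delta(H_k^N)]\leq C(\delta)$ so that the sum of logarithms is controlled via $\log(1-x)^{-1}\leq 2x$ for $x\leq \tfrac12$.

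Both bounds reduce to estimating sums against $|\beta^N|^2$. I would combine the pointwise bound $|\alpha_{l,k-l,k}|^2 \leq (|l|\wedge|k-l|)^2\,|k|^2$, which follows from the identity $l^\perp\cdot(k-l)=l^\perp\cdot k=-(k-l)^\perp\cdot k$ (yielding $|l^\perp\cdot k|\leq(|l|\wedge|k-l|)|k|$) and the reverse triangle inequality $(|l|-|k-l|)^2\leq|k|^2$, with the sharp denominator bound $(|l|^{2+2\delta}+|k-l|^{2+2\delta})^2\geq(|l|\vee|k-l|)^{4+4\delta}$ to conclude
\[
|\beta^N_{l,k-l,k}|^2 \leq \frac{(|l|\wedge|k-l|)^2|k|^2}{(|l|\vee|k-l|)^{4+4\delta}}.
\]
Splitting into the cases $|l|\leq|k-l|$ (in which $|k-l|\geq|k|/2$ automatically) and $|l|>|k-l|$ (in which $|l|\geq|k|/2$), the integrand bounds are $|k|^{2+2\delta}|l|^{2+2\delta}|k-l|^{-6-4\delta}$ and $|k|^{2+2\delta}|l|^{-4-2\delta}$ respectively. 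A standard dyadic decomposition of $\mathbb{Z}_0^2$ (separating $|l|\leq|k|/2$ from $|l|>|k|/2$) shows each case produces a contribution of order $|k|^{-2-2\delta}$; multiplied by the $|k|^{2+2\delta}$ prefactor this yields a uniform constant for (ii) and likewise $C(\delta)|k|^{-2}$ for (i).

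The difference bound is then immediate: $H_k^N-H_k^M$ has the same diagonal representation with the sum over $l$ restricted to indices satisfying $\max(|l|,|k-l|,|k|)>M$. Since all the above bounds are proved by estimating non-negative terms, restricting to any subset of summation indices only strengthens the inequalities, yielding the same uniform bound in $k$, $M$, and $N$. The principal technical obstacle is choosing the sharp denominator estimate: the more symmetric AM-GM bound $(|l|^{2+2\delta}+|k-l|^{2+2\delta})^2\geq 4(|l||k-l|)^{2+2\delta}$ is lossy precisely when $|l|$ and $|k-l|$ are of very different magnitudes and fails to yield uniform bounds, while the $(|l|\vee|k-l|)^{4+4\delta}$ estimate is essential to compensate the $|k|^{2\delta}$ weight in the regime where one frequency dominates.
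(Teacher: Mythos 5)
Your proof of the first bound is correct and structurally close to the paper's: both arguments evaluate the $dt$-integral mode by mode to write $H^N_k$ as an explicit Gaussian quadratic polynomial, observe that $\mathbb{D}_j H^N_k$ is linear in $\psi$ so that $|k|^{2\delta}\mathcal{E}^\delta(H^{N,\pm}_k)$ is dominated by a diagonal form $\sum_j a_j(k)|\psi_{k-j}|^2$, and reduce the whole proposition to the coefficient estimate ``normalized coefficients sum to $O(1)$'' (in the paper's notation, $\sum_j \gamma_{k,j}\precsim |k|^{-2\delta}$), using exactly the same pointwise ingredients ($|j^\perp\cdot(k-j)|\leq(|j|\wedge|k-j|)\,|k|$, the reverse triangle inequality, and a lower bound on the denominator that avoids the lossy AM--GM estimate). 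You diverge in two local steps. First, you compute the exponential moment exactly as a product $\prod_j(1-\lambda A_j)^{-1}$ using independence of the modes, whereas the paper sidesteps the exact computation with Jensen's inequality applied to the convex combination with weights $\gamma_{k,j}/\sum_j\gamma_{k,j}$; the two are interchangeable here (note your condition (i) is automatic from (ii) since the $A_j$ are nonnegative, and Jensen does not even require independence). Second, you prove the coefficient sum bound by a $\min/\max$ case split and dyadic summation, while the paper compares with the integral $\int_{\mathbb{R}^2}|x|^{2+2\delta}\bigl(|x|^{2+2\delta}+|k-x|^{2+2\delta}\bigr)^{-2}dx$ and scales; both give the same $|k|^{-2\delta}$. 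One cosmetic point: the statement is about the real and imaginary parts $H^{N,\pm}_k$, for which $\mathbb{D}_jH^{N,\pm}_k$ involves the two modes $\psi_{k-j}$ and $\psi_{-k-j}$, so your clean diagonal identity holds only for the complex coefficient; the bound $|a\pm b|^2\leq 2(|a|^2+|b|^2)$ reduces the $\pm$ case to your diagonal form exactly as in the paper, so this is harmless but should be said.

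The place where your argument genuinely falls short of the paper's proof is the difference estimate. Your monotonicity remark does prove the second claim as literally stated (indeed, as stated it is a formal consequence of the first claim, since $(\mathbf{1}_{|k|,|j|,|k-j|\leq N}-\mathbf{1}_{|k|,|j|,|k-j|\leq M})^2\gamma_{k,j}\leq\gamma_{k,j}$), but it extracts no gain from the truncation. The paper's proof shows the restricted coefficient sum satisfies $\sum_j(\mathbf{1}_{|k|,|j|,|k-j|\leq N}-\mathbf{1}_{|k|,|j|,|k-j|\leq M})^2\gamma_{k,j}\precsim M^{-2\delta}$ uniformly in $k$, and hence an exponential bound with the weight $\lambda M^{2\delta}$ in place of $\lambda|k|^{2\delta}$, i.e.\ smallness as $M\to\infty$. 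That decay is what is actually used downstream (it is the source of the $N^{-\delta}$ factor in the bound on $\varphi^{(N)}$ in the uniqueness section, and of the low-frequency smallness behind the Cauchy argument of Proposition \ref{ConstructDrift}). The repair fits inside your scheme at essentially no cost: the restricted sum only runs over indices with $|l|>M$, $|k-l|>M$, or $|k|>M$, and rerunning your two cases with these tail restrictions (e.g.\ $\sum_{|l|>M}|l|^{-4-2\delta}\precsim M^{-2-2\delta}$, and $|k|^{-2\delta}\mathbf{1}_{|k|>M}\leq M^{-2\delta}$) yields the extra $M^{-2\delta}$ and hence the stronger, $M$-weighted exponential bound.
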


\begin{proof}
Directly manipulating the definition of $H^N$ gives the following explicit expression:

\begin{align*}
H^N(\psi) & := -\int_0^\infty B^N(e^{-|D|^{2 + 2\delta}t} \psi) \, dt \\
& = -\int_0^\infty \sum_{|k| \leq N} B_k(e^{-|D|^{2 + 2\delta}t} \psi^{(N)})e_k \, dt \\
& = -\int_0^\infty \sum_{|k| \leq N} \sum_j \alpha_{k, j, k - j} e^{-(|j|^{2 + 2\delta} + |k - j|^{2 + 2\delta})t} \psi_j^{(N)} \psi_{k - j}^{(N)} e_k \, dt \\
& = -\sum_{|k| \leq N} \sum_j \alpha_{k, j, k - j} \int_0^\infty e^{-(|j|^{2 + 2\delta} + |k - j|^{2 + 2\delta})t} \, dt \, \psi_j^{(N)} \psi_{k - j}^{(N)} e_k \\
& = -\sum_{|k| \leq N} \sum_j \frac{\alpha_{k, j, k - j}}{|j|^{2 + 2\delta} + |k - j|^{2 + 2\delta}} \psi_j^{(N)} \psi_{k - j}^{(N)} e_k \\
& = \sum_{|k| \leq N} \sum_j \frac{|k|^{-1}(j^\perp \cdot (k - j))(|k - j| - |j|)}{|j|^{2 + 2\delta} + |k - j|^{2 + 2\delta}} \psi_j^{(N)} \psi_{k - j}^{(N)} e_k \\
& = \sum_{|k| \leq N} \sum_{j_1 + j_2 = k} \frac{|k|^{-1}(j_1^\perp \cdot j_2)(|j_2| - |j_1|)}{|j_1|^{2 + 2\delta} + |j_2|^{2 + 2\delta}} \psi_{j_1}^{(N)} \psi_{j_2}^{(N)} e_k. \\
\end{align*}

Now denote by $H^N(\psi)_k^\pm$ the real and imaginary parts of the $k$-th component of $H(\psi)$, respectively.  If we adopt the convention $i^+ = 1$ and $i^- = i$, then since both $\psi$ and $H^N(\psi)$ are real-valued we have for $|k| \leq N$ that
\begin{align*}
H^N(\psi)_k^\pm & = \frac{H^N(\psi)_k \pm \overline{H^N(\psi)_k}}{2i^\pm} \\
& = \frac{H^N(\psi)_k \pm H^N(\psi)_{-k}}{2i^\pm} \\
& = \frac{1}{2|k|i^\pm} \sum_{j_1 + j_2 = k} \frac{(j_1^\perp \cdot j_2)(|j_2| - |j_1|)}{|j_1|^{2 + 2\delta} + |j_2|^{2 + 2\delta}} (\psi_{j_1}^{(N)} \psi_{j_2} ^{(N)} \pm \psi_{-j_1} ^{(N)} \psi_{-j_2} ^{(N)}).
\end{align*}
We recall that our martingales are bounded by quantities of the form
\begin{equation}
\mathcal{E}^\delta(H_k^\pm)(\psi) := \frac12 \sum_{j \in \mathbb{Z}} |j|^{2\delta} |\mathbb{D}_j H(\psi)_k^\pm|^2;
\end{equation}
we now expand this quantity explicitly using the above characterization of $H^N$.  We have first that
\begin{align}\label{DerivOfH}
\mathbb{D}_j(H^N(\psi)^\pm_k) & = \frac{1}{|k|i^\pm} \left(\frac{(j^\perp \cdot (k - j))(|k - j| - |j|)}{|j|^{2 + 2\delta} + |k - j|^{2 + 2\delta}} \psi_{k - j} ^{(N)} \pm \frac{(j^\perp \cdot (-k - j))(|k + j| - |j|)}{|j|^{2 + 2\delta} + |k + j|^{2 + 2\delta}} \psi_{-k - j} ^{(N)}\right) \\
& =: \frac{1}{|k|i^\pm} \left(\beta_{k, j} \psi_{k - j} ^{(N)} \pm 
\beta_{k, -j} \psi_{-k - j} ^{(N)}\right). \notag
\end{align}
Using the symmetries $\beta_{k, -j} = \beta_{k, j}$ and $|\psi_{l}|^2 = |\psi_{-l}|^2$, we have
\begin{align}\label{EOfDerivOfH}
\mathcal{E}^\delta(H_k^\pm)(\psi) & = \frac12 \sum_{j \in \mathbb{Z}^2} |j|^{2\delta} |\mathbb{D}_j(H^N(\psi)^\pm_k)|^2 \notag \\
& \leq \sum_{j \in \mathbb{Z}^2} \pm \frac{|j|^{2\delta}}{|k|^2} \left( \beta_{k, j}^2 |\psi_{k - j}^{(N)}|^2 + \beta_{k, -j}^2 |\psi_{-k - j} ^{(N)} |^2\right) \notag \\
& = 2\sum_{j \in \mathbb{Z}^2} \pm \frac{|j|^{2\delta}}{|k|^2} \beta_{k, j}^2 |\psi_{k - j} ^{(N)} |^2 \notag \\
& = 2\sum_{j \in \mathbb{Z}^2} \pm \frac{|j|^{2\delta}}{|k|^2} \left(\frac{(j^\perp \cdot (k - j))(|k - j| - |j|)}{|j|^{2 + 2\delta} + |k - j|^{2 + 2\delta}}\right)^2 |\psi_{k - j}^{(N)}|^2\\
& = 2\sum_{j \in \mathbb{Z}^2} \pm \frac{|j|^{2\delta}}{|k|^2|k - j|^2} \left(\frac{(j^\perp \cdot (k - j))(|k - j| - |j|)}{|j|^{2 + 2\delta} + |k - j|^{2 + 2\delta}}\right)^2 |\psi_{k - j}^{(N)}|^2 |k - j|^2 \notag \\
& =: \sum_{j \in \mathbb{Z}^2} \gamma_{k, j} |\psi_{k - j}^{(N)}|^2 |k - j|^2. \notag
\end{align}
Notice that we may bound the coefficient $\gamma_{k, j}$ as follows:
\begin{align*}
\gamma_{k, j} & \precsim \frac{|j|^{2\delta}}{|k|^2|k - j|^2}\left(\frac{|j||k - j||k|}{|j|^{2 + 2\delta} + |k - j|^{2 + 2\delta}}\right)^2 \\
& \leq \frac{|j|^{2 + 2\delta}}{(|j|^{2 + 2\delta} + |k - j|^{2 + 2\delta})^2}. \\
\end{align*}
As a consequence we have 
\begin{align*}
\sum_{j \in \mathbb{Z}} \gamma_{k, j} & \leq \int_{\mathbb{R}^2} \frac{|x|^{2 + 2\delta}}{(|x|^{2 + 2\delta} + |k - x|^{2 + 2\delta})^2} \, dx \\
& = |k|^{-2\delta} \int_{\mathbb{R}^2} \frac{|y|^{2 + 2\delta}}{(|y|^{2 + 2\delta} + |k/|k| - y|^{2 + 2\delta})^2} \, dy \\
& \precsim |k|^{-2\delta},
\end{align*}
uniformly in $k$.  Using this, we now proceed to give exponential bounds for the quantity $\mathcal{E}^\delta(H_k^{N, \pm})(\psi)$.  We have
\begin{align}
& \quad\; \mathbb{E}_\rho \exp\left(\lambda |k|^{2\delta} \mathcal{E}^\delta(H_k^{N, \pm})(\psi)\right) \label{ExpectationToBound} \\
& \leq \mathbb{E}_\rho \exp\left(\lambda |k|^{2\delta} \sum_{j \in \mathbb{Z}^2} \gamma_{k, j} |\psi_{k - j}^{(N)}|^2|k - j|^2 \right) \\
& \leq \mathbb{E}_\rho \exp\left(C\lambda \frac{\sum_{j \in \mathbb{Z}^2} \gamma_{k, j} |\psi_{k - j}^{(N)}|^2|k - j|^2}{\sum_{j \in \mathbb{Z}^2} \gamma_{k, j}} \right) \label{UsedGammaSumEstimate} \\
& \leq \frac{\sum_{j \in \mathbb{Z}^2} \gamma_{k, j} \mathbb{E}_\rho(\exp(C\lambda|\psi_{k - j}^{(N)}|^2|k - j|^2))}{\sum_{j \in \mathbb{Z}^2} \gamma_{k, j}}, \label{UsedJensenHere}
\end{align}
where in \eqref{UsedGammaSumEstimate} we used the estimate on $\sum_j \gamma_{j, k}$, and in \eqref{UsedJensenHere} we used Jensen's inequality.  Now since $\psi_{k - j}$ is distributed according to our Gaussian measure, it has mean zero and variance $|k - j|^{-2}$, we have that for sufficiently small $\lambda$ the expectation $\mathbb{E}_\rho(\exp(C\lambda|\psi_{k - j}|^2|k - j|^2))$ is uniformly bounded in $k, j$.  Thus the expectation \eqref{ExpectationToBound} is uniformly bounded in $k$.

Next we estimate differences of the truncated operator.  We can write using indicator functions that
\begin{equation}
H^N(\psi)^\pm_k = \frac{1}{2|k|i^\pm} \sum_{j_1 + j_2 = k} \mathbf{1}_{|k|, |j_1|, |j_2| \leq N} \frac{(j_1^\perp \cdot j_2)(|j_2| - |j_1|)}{|j_1|^{2 + 2\delta} + |j_2|^{2 + 2\delta}} (\psi_{j_1} \psi_{j_2} \pm \psi_{-j_1} \psi_{-j_2}).
\end{equation}
Using the notation $\gamma_{k, j}$ introduced in \eqref{EOfDerivOfH}, we can estimate for any $M \leq N$ the difference
\begin{equation}\label{DifferenceOfH}
\mathcal{E}^\delta(H^N(\psi)_k - H^M(\psi)_k) \precsim \sum_{j \in \mathbb{Z}^2} (\mathbf{1}_{|k|, |j|, |k - j| \leq N} - \mathbf{1}_{|k|, |j|, |k - j| \leq M})^2 \gamma_{k, j} |\psi_{k - j}|^2 |k - j|^2.
\end{equation}
We coarsely estimate the difference of the indicator functions above by writing
\begin{align*}
\mathbf{1}_{|k|, |j|, |k - j| \leq N} - \mathbf{1}_{|k|, |j|, |k - j| \leq M} & \leq \mathbf{1}_{|k|, |j|, |k - j| \leq N} \mathbf{1}_{\{|k| > M \text{ or } |j| > M \text{ or } |k - j| > M\}} \\
& \leq \mathbf{1}_{|k|, |j|, |k - j| \leq N} (\mathbf{1}_{|k| > M} + \mathbf{1}_{|j| > M} + \mathbf{1}_{|k - j| > M}) \\
& \leq \mathbf{1}_{|k| > M} + \mathbf{1}_{|j| > M} + \mathbf{1}_{|k - j| > M}.
\end{align*}
In preparation for controlling the expectation of \eqref{DifferenceOfH} as did for \eqref{ExpectationToBound}, we estimate the quantity
\begin{align*}
& \quad \sum_{j \in \mathbb{Z}^2} (\mathbf{1}_{|k|, |j|, |k - j| \leq N} - \mathbf{1}_{|k|, |j|, |k - j| \leq M})^2 \gamma_{k, j} \\
& \precsim \mathbf{1}_{|k| > M} \int_{\mathbb{R}^2} \frac{|x|^{2 + 2\delta}}{(|x|^{2 + 2\delta} + |k - x|^{2 + 2\delta})^2} \, dx + \int_{|x| > M} \frac{|x|^{2 + 2\delta}}{(|x|^{2 + 2\delta} + |k - x|^{2 + 2\delta})^2} \, dx \\
& \leq |k|^{-2\delta}\mathbf{1}_{|k| > M} + \int_{|x| > M} \frac{dx}{|x|^{2 + 2\delta}} \\
& \precsim_\delta M^{-2\delta}.
\end{align*}
This gives us the estimate
\begin{align*}
& \quad \mathbb{E}_\rho \exp(\lambda M^{2\delta} \mathcal{E}^\delta(H^N(\psi)_k - H^M(\psi)_k)) \\
& \leq \frac{\sum_{j \in \mathbb{Z}^2} (\mathbf{1}_{|k|, |j|, |k - j| \leq N} - \mathbf{1}_{|k|, |j|, |k - j| \leq M})^2 \gamma_{k, j} \mathbb{E}_\rho(\exp(C\lambda|\psi_{k - j}|^2|k - j|^2))}{\sum_{j \in \mathbb{Z}^2} (\mathbf{1}_{|k|, |j|, |k - j| \leq N} - \mathbf{1}_{|k|, |j|, |k - j| \leq M})^2 \gamma_{k, j}},
\end{align*}
and so for sufficiently small $\lambda$ this quantity is uniformly bounded in $k$, $N$, and $M$.
\end{proof}

We can use these exponential estimates to derive further bounds on small-time differences of the time integral of the nonlinearity.  These will permit us to show the compactness needed to extract our candidate solution.

\begin{proposition}\label{GBounds}

Let $p \in \mathbb{N}$ be given.  Denote $G_M(t)_k := \int_0^t B_M(\psi(\tau))_k \, dt$ and $G_{N, M}(t)_k = G_N(t)_k - G_M(t)_k$ for $N > M$.  Then we have the following estimates:
\begin{itemize}
\item[(a)]{$\|\sup_{t\in [0, T]} G_M(t)_k\|_{L^{2p}(\rho)} \precsim MT$.}
\item[(b)]{$\|\sup_{t \in [0, T]} |G_M(t)_k| \|_{L^{2p}(\rho)} \precsim |k|^{-\delta}T^\frac12$.}
\item[(c)]{For every $N > M$, $\|\sup_{t\in[0, T]} |G_{N, M}(t)_k|\|_{L^{2p}(\rho)} \precsim |k|^{-\delta}T^\frac12$.}
\item[(d)]{For all $t_1, t_2$ so that $|t_2 - t_1|$ is sufficiently small depending on $N$ and $\delta$, we have $\sup_{N \geq 0} \|\sup_{t \in [0, T]} |G_N(t_2)_k - G_N(t_1)_k|\|_{L^{2p}(\rho)} \precsim |t_2 - t_1|^{\frac{1 + 2\delta}{2 + 2\delta}}$.}
\end{itemize}
\end{proposition}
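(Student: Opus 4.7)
The plan is to handle the four estimates in sequence; (b), (c), (d) will all leverage the Ito trick of the previous section combined with the exponential bounds of Proposition \ref{ExponentialBounds}, while (a) is a crude direct bound.

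For (a), I would exploit that the truncation makes $B_M(\psi)_k = \sum_{j_1+j_2=k,\,|j_i|\leq M}\alpha_{j_1,j_2,k}\psi_{j_1}\psi_{j_2}$ a second-order Wiener chaos of bounded $L^2$-norm. Gaussian hypercontractivity gives $\|B_M(\psi)_k\|_{L^{2p}(\rho)} \precsim_p \|B_M(\psi)_k\|_{L^2(\rho)}$, and the identity $|\alpha_{j_1,j_2,k}|^2/(|j_1|^2|j_2|^2) \leq (|j_1|-|j_2|)^2/|k|^2$ combined with $||j_1|-|j_2|| \leq 2M$ and the count of $O(M^2)$ admissible terms yields $\|B_M(\psi)_k\|_{L^2(\rho)} \precsim M$. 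Minkowski's integral inequality and stationarity then give $\|\sup_{t\in[0,T]}|G_M(t)_k|\|_{L^{2p}} \leq T\|B_M(\psi)_k\|_{L^{2p}} \precsim MT$.

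For (b), I would apply the Ito trick with $h = H^M_k$, using $L_0 H^M_k = B^M_k$, reducing the bound to controlling $\|\mathcal{E}^\delta(H^M_k)\|_{L^p}$. The exponential integrability $\mathbb{E}_\rho\exp(\lambda|k|^{2\delta}\mathcal{E}^\delta(H^M_k)) \precsim 1$ from Proposition \ref{ExponentialBounds} implies $|k|^{2\delta}\mathcal{E}^\delta(H^M_k)$ has subexponential tails, so all $L^p$ moments are bounded and $\|\mathcal{E}^\delta(H^M_k)\|_{L^p(\rho)} \precsim_p |k|^{-2\delta}$ uniformly in $M$. Bound (b) follows by substitution, and bound (c) is identical with $H^N_k - H^M_k$ replacing $H^M_k$, appealing to the second exponential bound in Proposition \ref{ExponentialBounds}.

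The main technical challenge lies in (d), where the improved temporal rate $(1+2\delta)/(2+2\delta) > 1/2$ must be extracted. The key idea is a time-scale truncation of the Green's function: with $\epsilon := |t_2-t_1|$, set
$$H^N_\epsilon(\psi) := -\int_0^\epsilon B^N(e^{-s|D|^{2+2\delta}}\psi)\,ds, \qquad H^{N,\epsilon}(\psi) := -\int_\epsilon^\infty B^N(e^{-s|D|^{2+2\delta}}\psi)\,ds.$$
Exploiting that $L_0$ commutes with $S_\delta(\epsilon) := e^{-\epsilon|D|^{2+2\delta}}$ when acting on quadratic functionals (as $H^N$ is), one obtains $L_0 H^{N,\epsilon} = B^N(S_\delta(\epsilon)\cdot)$ and hence $L_0 H^N_\epsilon = B^N - B^N(S_\delta(\epsilon)\cdot)$, so
$$\int_{t_1}^{t_2} B^N(\psi_\tau)_k\,d\tau = \int_{t_1}^{t_2}L_0 H^N_\epsilon(\psi_\tau)_k\,d\tau + \int_{t_1}^{t_2}L_0 H^{N,\epsilon}(\psi_\tau)_k\,d\tau,$$
with both pieces amenable to the Ito trick on $[t_1,t_2]$. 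The explicit formula for $H^N_\epsilon$ differs from $H^N$ by the weight $(1-e^{-\epsilon(|j_1|^{2+2\delta}+|j_2|^{2+2\delta})})$ in each mode; splitting at the transition scale $|j|\sim \epsilon^{-1/(2+2\delta)}$ and balancing the low-frequency contribution (where the weight is $O(\epsilon|j|^{2+2\delta})$) against the high-frequency tail gives $\|\mathcal{E}^\delta(H^N_\epsilon)\|_{L^p} \precsim \epsilon^{\delta/(1+\delta)}$. The Ito trick then produces $\epsilon^{1/2}\cdot\epsilon^{\delta/(2(1+\delta))} = \epsilon^{(1+2\delta)/(2+2\delta)}$ for the first piece. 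For the second piece, the exponential factor in $S_\delta(\epsilon)$ effectively restricts the sum to modes $|j|\precsim \epsilon^{-1/(2+2\delta)}$, and provided $|t_2-t_1|$ is small enough relative to $N$ that this scale sits inside the truncation, a parallel calculation produces a compatible bound. The hardest step will be this frequency-splitting analysis establishing $\|\mathcal{E}^\delta(H^N_\epsilon)\|_{L^p} \precsim \epsilon^{\delta/(1+\delta)}$ together with the matching bound on the smoothed piece uniform in $N$.
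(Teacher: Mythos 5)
Your (a)--(c) follow the paper's own route: (a) is the same crude bound on $\|B_M(\psi)_k\|_{L^{2p}(\rho)}$ (your hypercontractivity step is in fact a cleaner justification of the paper's second-chaos computation), and (b), (c) are exactly the paper's argument, combining $L_0H^M=B^M$, Burkholder--Davis--Gundy, and the exponential bounds of Proposition \ref{ExponentialBounds} via $|f|^p\precsim_p e^f$. Where you genuinely diverge is (d). The paper does not introduce any time-scale truncation of the Green's function: it simply writes $G_N = G_M + G_{N,M}$, bounds the increment of $G_M$ by $M|t_2-t_1|$ using (a) and the increment of $G_{N,M}$ by $M^{-\delta}|t_2-t_1|^{1/2}$ using (c), and optimizes $M\sim|t_2-t_1|^{-\frac{1}{2(1+\delta)}}$ (this is also why the hypothesis ``$|t_2-t_1|$ small depending on $N$'' appears: one needs $M<N$). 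Your decomposition $H^N=H^N_\epsilon+H^{N,\epsilon}$ cuts at the same frequency scale $\epsilon^{-1/(2+2\delta)}$ and lands on the same exponent, and your key claim $\|\mathcal{E}^\delta(H^N_\epsilon)\|_{L^p}\precsim\epsilon^{\delta/(1+\delta)}$ does check out (the low-frequency region contributes $\epsilon^2\sum_{|j|\le R}|j|^{2+2\delta}\sim\epsilon^{\delta/(1+\delta)}$ with $R=\epsilon^{-1/(2+2\delta)}$, and the tail $\sum_{|j|>R}|j|^{-2-2\delta}\sim R^{-2\delta}$ matches), so your route works; it just costs two new estimates where the paper reuses (a) and (c) verbatim.

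Two caveats. First, your statement that ``both pieces are amenable to the Ito trick'' is not right for the smoothed piece: $\mathcal{E}^\delta(H^{N,\epsilon}_k)$ does not become small as $\epsilon\to0$ (it tends to $\mathcal{E}^\delta(H^N_k)\sim|k|^{-2\delta}$), so the Ito trick there only yields $|k|^{-\delta}|t_2-t_1|^{1/2}$, which misses the claimed rate for small $|k|$. What saves you is exactly the ``parallel calculation'' you gesture at: since $L_0H^{N,\epsilon}=B^N(S_\delta(\epsilon)\cdot)$, a direct second-chaos bound gives $\|B^N(S_\delta(\epsilon)\psi)_k\|_{L^{2p}}\precsim\epsilon^{-1/(2+2\delta)}$, and integrating over an interval of length $\epsilon$ gives $\epsilon^{(1+2\delta)/(2+2\delta)}$; this step should be stated as a crude (a)-type bound, not an Ito-trick bound. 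Second, in (a) your chain ``$|\alpha_{j_1,j_2,k}|^2/(|j_1|^2|j_2|^2)\le(|j_1|-|j_2|)^2/|k|^2$ combined with $\bigl||j_1|-|j_2|\bigr|\le 2M$'' gives only $\|B_M(\psi)_k\|_{L^2}\precsim M^2/|k|$, which is not the stated $M$ uniformly in $k$; use instead the reverse triangle inequality $\bigl||j_1|-|j_2|\bigr|\le|j_1+j_2|=|k|$, so each summand is $O(1)$ and the count of $O(M^2)$ terms gives $\precsim M$.
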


\begin{proof}
To prove (a), notice that we first have using \eqref{NonlinearityComponent} the coarse bound

\begin{align*}
& \quad \|B_M(\psi)_k\|_{L^{2p}(\rho)}^{2p} \\
& \leq \left(\prod_{j = 1}^p \frac{\mathbf{1}_{|h_{1, j}|, |h_{2, j}| \geq M}}{|k|} \sum_{h_{1, j} + h_{2, j} = k} (h_{i, j} \cdot h_{2, j}^\perp)(|h_{1, j}| - |h_{2, j}|)\right)\int \left(\prod_{j = 1}^p |\psi_{h_{1, j}} \psi_{h_{2, j}}|\right) d\rho(\psi) \\
& \precsim_p \left(\prod_{j = 1}^p \sum_{h_{1, j} + h_{2, j} = k} \mathbf{1}_{|h_{1, j}|, |h_{2, j}| \geq M}(h_{i, j} \cdot h_{2, j}^\perp)\frac{|h_{1, j}| - |h_{2, j}|}{|k|}\right)\left(\prod_{j = 1}^p \frac{1}{|h_{1, j}| |h_{2, j}|}\right) \\
& \precsim \prod_{j = 1}^p \sum_{h_{1, j} + h_{2, j} = k} \mathbf{1}_{|h_{1, j}|, |h_{2, j}| \geq M}\left(\frac{h_{1, j}}{|h_{1, j}|} \cdot \frac{h_{2, j}^\perp}{|h_{2, j}|}\right)\frac{|h_{1, j}| - |h_{2, j}|}{|k|} \\
& \precsim \prod_{j = 1}^p \sum_{h_{1, j} + h_{2, j} = k} \mathbf{1}_{|h_{1, j}|, |h_{2, j}| \geq M} \\
& \precsim_p M^{2p}
\end{align*}
from which, if we denote $G_M(t) := \int_0^t B_M(\psi(\tau)) \, dt$ we have whenever $0 \leq t \leq T$ that
\begin{equation}
\|\sup_{t \in [0, T]} |G_M(t)_k|\|_{L^{2p}(\rho)} \leq \int_{0}^{T} \|B_M(\psi(\tau))_k\|_{L^{2p}(\rho)}\,d\tau \leq MT.
\end{equation}

We can also estimate this term in a different way in (b) using the fact that $L_0 H_M = B_M$ along with the Burkholder-Davis-Gundy inequality as follows:

\begin{align*}
\left\|\sup_{t \in [0, T]} |G_M(\psi(t))_k|\right\|_{L^{2p}(\rho)} & = \left\|\sup_{t \in [0, T]} \int_{0}^{t} L_0H_M(\psi(\tau)_k)\,d\tau\right\|_{L^{2p}(\rho)} \\
& \precsim_p \left\| \int_{0}^{T} \mathcal{E}^\delta(H_M(\psi(\tau))_k)\,d\tau\right\|_{L^{p}(\rho)}^\frac12 \\
& \precsim_p T^\frac12 \left(\left\|\mathcal{E}^\delta(H_M(\psi(\tau))_k) \right\|_{L^{p}(\rho)}\right)^\frac12.
\end{align*}
Now we may use the elementary inequality $|f|^p \precsim_p e^f$ and Proposition \ref{ExponentialBounds} (and reintroduce the small quantity $\lambda$ depending on $\delta$ appearing in that proposition) to further estimate that
\begin{align*}
\left\|\sup_{t \in [0, T]} |G_M(\psi(t))_k|\right\|_{L^{2p}(\rho)} & \precsim_{p, \delta} T^\frac12|k|^{-\delta} \left(\int e^{\lambda|k|^{2\delta}\mathcal{E}^\delta(H_M(\psi(\tau))_k)} d\rho(\psi)\right)^\frac{1}{2p} \\
& \leq |k|^{-\delta}T^\frac12.
\end{align*}
Applying the exact same argument using the difference estimate yields inequality (c).  To show (d), we interpolate between (a) and (c) by writing, $G_{N, M}(t)_k := G_N(t)_k - G_M(t)_k$ with $M < |k| < N$,
\begin{align*}
\|G_N(t_2)_k - G_N(t_1)_k\|_{L^{2p}(\rho)} & \leq \|G_M(t_2)_k - G_M(t_1)_k \|_{L^{2p}(\rho)} + \|G_{N, M}(t_2)_k - G_{N, M}(t_1)_k \|_{L^{2p}(\rho)} \\
& \precsim M|t_2 - t_1| + M^{-\delta}|t_2 - t_1|^\frac12,
\end{align*}
and choosing $M \sim |t_2 - t_1|^{-\frac{1}{2(1 + \delta)}}$ to balance the two terms yields (d).
\end{proof}

We can perform almost exactly the same estimates for the semigroup convolution of the drift term.  Denote
\begin{equation}
\tilde{G}_M(t) = \int_0^t e^{-|D|^{2\delta}(t - \tau)} B_M(\psi(\tau)) \, d\tau.
\end{equation}
Then we have
\begin{proposition}\label{FinerEstimatesSemigroup}
Let $p \in \mathbb{N}$ be given.  Then we have the following estimates:
\begin{itemize}
\item[(a)]{$\|\sup_{0 \leq t \leq T} \tilde{G}_M(t)_k\|_{L^{2p}(\rho)} \precsim M\left(\frac{1 - e^{-|k|^{-2\delta}T}}{2|k|^{2\delta}}\right)$.}
\item[(b)]{$\|\sup _{0 \leq t \leq T} \tilde{G}_M(t)_k \|_{L^{2p}(\rho)} \precsim |k|^{-\delta}\left(\frac{1 - e^{-|k|^{-2\delta}T}}{2|k|^{2\delta}}\right)^\frac12$.}
\item[(c)]{For every $N > M$, $\|\sup _{0 \leq t \leq T} \tilde{G}_{N, M}(t)_k \|_{L^{2p}(\rho)} \precsim |k|^{-\delta}\left(\frac{1 - e^{-|k|^{-2\delta}T}}{2|k|^{2\delta}}\right)^\frac12$.}
\item[(d)]{For any $\epsilon \in (0, 1)$ and $|t_2 - t_1| \leq 1$, we have $$\| \tilde{G}_N(t_2)_k - \tilde{G}_N(t_1)_k \|_{L^{2p}(\rho)} \precsim |k|^{-2\delta + 2\delta\epsilon}|t_2 - t_1|^\epsilon.$$}
\end{itemize}
\end{proposition}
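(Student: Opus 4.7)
The plan is to mirror the proof of Proposition \ref{GBounds}, with modifications to accommodate the semigroup factor $e^{-|D|^{2\delta}(t-\tau)}$ appearing in the definition of $\tilde G_M$. Compared to the direct time integral $G_M$, the semigroup provides additional damping at frequency $|k|$, which replaces the $T$-dependence of the previous estimates by $(1-e^{-2|k|^{2\delta}T})/(2|k|^{2\delta})$ in the coarse bound and by its square root in the Ito-trick-based bounds, producing an extra $|k|^{-2\delta}$ gain at high frequency.

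For (a), I would apply Minkowski's integral inequality together with the coarse stationary bound $\|B_M(\psi)_k\|_{L^{2p}(\rho)} \precsim M$ derived at the start of the proof of Proposition \ref{GBounds}(a):
$$\|\tilde G_M(t)_k\|_{L^{2p}(\rho)} \leq \int_0^t e^{-|k|^{2\delta}(t-\tau)} \|B_M(\psi(\tau))_k\|_{L^{2p}(\rho)}\, d\tau \precsim M\cdot \frac{1-e^{-|k|^{2\delta}t}}{|k|^{2\delta}},$$
and exploit monotonicity of the right-hand side in $t$ together with continuity of $t \mapsto \tilde G_M(t)_k$ to move the supremum inside. For (b), I would use the identity $L_0 H_M = B_M$ from the start of Section 8 to rewrite
$$\tilde G_M(t)_k = \int_0^t e^{-|k|^{2\delta}(t-\tau)} L_0 H_M(\psi(\tau))_k\, d\tau$$
and apply the improved Ito trick of Section 7 with exponential weight $a = -|k|^{2\delta}$ and $\tilde h = H_M(\cdot)_k$. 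The prefactor $\bigl((1-e^{-2|k|^{2\delta}T})/(2|k|^{2\delta})\bigr)^{1/2}$ comes directly from that proposition, while the $L^p(\rho)$ norm of $\mathcal{E}^\delta(H_M)_k$ is controlled by Proposition \ref{ExponentialBounds} through the elementary inequality $|f|^p \precsim_p e^{\lambda f}$, exactly as in the proof of Proposition \ref{GBounds}(b), yielding the additional $|k|^{-\delta}$ gain. Item (c) is the identical argument applied to the difference $H^N - H^M$, using the matching exponential bound on $\mathcal{E}^\delta(H^{N,\pm}-H^{M,\pm})$ supplied by Proposition \ref{ExponentialBounds}.

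For (d), I would decompose
$$\tilde G_N(t_2)_k - \tilde G_N(t_1)_k = \int_{t_1}^{t_2} e^{-|k|^{2\delta}(t_2-\tau)} B_N(\psi(\tau))_k\, d\tau + \bigl(e^{-|k|^{2\delta}(t_2-t_1)}-1\bigr)\tilde G_N(t_1)_k,$$
estimate the first term via the Ito trick on $[t_1,t_2]$ as in (b) (yielding order $|k|^{-2\delta}(1-e^{-2|k|^{2\delta}|t_2-t_1|})^{1/2}$), and bound the second by combining the (b)-estimate applied at $t_1$ with the elementary inequality $|1-e^{-|k|^{2\delta}|t_2-t_1|}|\leq (|k|^{2\delta}|t_2-t_1|)^\epsilon$ valid for $\epsilon \in (0,1)$. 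Interpolating via $(1-e^{-x})^{1/2} \leq x^\epsilon$ (for $x\leq 1$ and $\epsilon \leq 1/2$, with the trivial bound in the complementary regime) collapses both pieces into the stated form $|k|^{-2\delta+2\delta\epsilon}|t_2-t_1|^\epsilon$.

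The main obstacle lies in the interpolation step in (d): the two pieces of the decomposition behave very differently in the regimes $|k|^{2\delta}|t_2-t_1|\leq 1$ and $|k|^{2\delta}|t_2-t_1|>1$, and collapsing them into a single uniform bound across the full range $\epsilon\in(0,1)$ requires careful selection of interpolation inequalities. Beyond this bookkeeping, the proof amounts to a semigroup-enhanced translation of the arguments used for Proposition \ref{GBounds}.
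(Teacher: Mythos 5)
Your proposal matches the paper's proof: parts (a)--(c) are the same line-by-line rerun of Proposition \ref{GBounds} with the crude time factor $T$ replaced by the semigroup integral $\int_0^t e^{-|k|^{2\delta}(t-\tau)}\,d\tau$ (with the weighted Ito trick of Section 7 and Proposition \ref{ExponentialBounds} doing the work in (b)--(c)), and part (d) uses exactly the paper's decomposition into $\int_{t_1}^{t_2} e^{-|k|^{2\delta}(t_2-\tau)}B_N\,d\tau$ plus $\bigl(e^{-|k|^{2\delta}(t_2-t_1)}-1\bigr)\tilde{G}_N(t_1)_k$ followed by interpolation. If anything, your treatment of (d) is slightly cleaner, since you keep the $|k|$-dependence in both pieces before interpolating, whereas the paper interpolates between the coarser bounds $|t_2-t_1|^{1/2}$ and $|k|^{-2\delta}$.
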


\begin{proof}
Parts (a), (b), (c) are a line by line reproof of the previous proposition, except that instead of using $\sup_{0 \leq t \leq T} \int_0^t \, d\tau = T$ to coarsely estimate the time integrals, we use the bound $$\int_0^t e^{-|k|^{2\delta}(t - \tau)} \, d\tau = \frac{1 - e^{-|k|^{-2\delta}t}}{2|k|^{2\delta}}.$$

To show part (d), we consider time differences in the semigroup convolution with the drift.  Without loss of generality we consider times $t_1, t_2 \in [0, T]$ satisfying $|t_2 - t_1| \leq 1$.  Then, by following the proof of (b) in the previous proposition and by using (b) above, we can estimate as follows:
\begin{align*}
& \quad \| \tilde{G}_N(t_2)_k - \tilde{G}_N(t_1)_k \|_{L^{2p}(\rho)} \\
& \leq \left\|\int_{t_1}^{t_2} e^{-|k|^{2\delta}(t_2 - \tau)} B_N(\psi_\tau, \psi_\tau) \, d\tau \right\|_{L^{2p}(\rho)} + (e^{-|k|^{2\delta}|t_2 - t_1|} - 1)\left\|\tilde{G}_N(t_1)_k \right\|_{L^{2p}(\rho)} \\
& \precsim |k|^{-\delta}|t_2 - t_1|^\frac12 + (e^{-|k|^{2\delta}|t_2 - t_1|} - 1)|k|^{-2\delta} \\
& \precsim |t_2 - t_1|^\frac12.
\end{align*}
However, we can also coarsely estimate each term in the difference by (b), yielding the bound
\begin{equation}
\| \tilde{G}_N(t_2)_k - \tilde{G}_N(t_1)_k \|_{L^{2p}(\rho)} \precsim |k|^{-2\delta}.
\end{equation}
Interpolating between these two bounds then gives the desired result.
\end{proof}

We pause to note that these propositions give us sufficient control of the truncated drifts to construct our candidate for the drift of our energy solution.

\begin{proposition}\label{ConstructDrift}
Suppose that $\psi \in \mathscr{R}_\delta(0, T)$ is given, and let $\epsilon > 0$ be given.  Then there exists a process which we denote by $\int_0^t B(\psi_\tau, \psi_\tau) \, d\tau$ for which
$$\lim_{N \to \infty} \int_0^t B_N(\psi^{(N)}_\tau) \, d\tau = \int_0^t B(\psi_\tau) \, d\tau,$$
with the convergence being in the sense of $L^{2p}_\rho C([0, T], \mathcal{F}L^{2p, 0})$ for $p > \delta^{-1}$.  In particular the convergence is in the sense of $C([0, T], \mathcal{F}L^{\infty, 0})$ almost surely.
\end{proposition}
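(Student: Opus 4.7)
The plan is to show the sequence $\{G_N\}_{N \geq 1}$ is Cauchy in $L^{2p}(\Omega; C([0,T]; \mathcal{F}L^{2p,0}))$ for every $p > \delta^{-1}$, identify the limit as the process $\int_0^t B(\psi_\tau, \psi_\tau)\,d\tau$, and then upgrade to almost sure convergence in $C([0,T]; \mathcal{F}L^{\infty,0})$ via the trivial embedding $\mathcal{F}L^{2p,0} \hookrightarrow \mathcal{F}L^{\infty,0}$ (which is just $\sup_k |f_k| \leq (\sum_k |f_k|^{2p})^{1/(2p)}$) together with a Borel--Cantelli argument along a fast subsequence.

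The core work is a mode-by-mode comparison. For $N > M$, I would combine parts (b) and (c) of Proposition \ref{GBounds} with the sharper form of the exponential bound from the proof of Proposition \ref{ExponentialBounds} (where the coefficient sum $\sum_j \gamma_{k,j}$ for the difference $H^N - H^M$ is controlled by $M^{-2\delta}$ rather than $|k|^{-2\delta}$). Observing that $G_M(t)_k \equiv 0$ whenever $|k| > M$, this would yield the composite estimate
\begin{equation*}
\left\|\sup_{t \in [0,T]} |G_N(t)_k - G_M(t)_k|\right\|_{L^{2p}(\rho)} \precsim \min(|k|^{-\delta}, M^{-\delta})\, T^{1/2}.
\end{equation*}
Summing the $(2p)$-th power over $k \in \mathbb{Z}_0^2$ and splitting the sum at $|k| \sim M$ then gives
\begin{equation*}
\mathbb{E}_\rho\, \sup_{t \in [0,T]} \|G_N(t) - G_M(t)\|_{\mathcal{F}L^{2p,0}}^{2p} \precsim T^p\, M^{2 - 2p\delta},
\end{equation*}
which vanishes as $M \to \infty$ precisely when $p > \delta^{-1}$, yielding the Cauchy property in $L^{2p}(\Omega; L^\infty([0,T]; \mathcal{F}L^{2p,0}))$.

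For the $C([0,T])$-valued statement, each truncated $G_N$ lies in $C([0,T]; \mathcal{F}L^{2p,0})$ as a finite-dimensional time integral, and the Cauchy bound above is uniform in $t$, so the limit takes values continuously in $t$. Moreover, estimate (d) of Proposition \ref{GBounds} provides a uniform H\"older modulus of continuity of exponent $(1+2\delta)/(2+2\delta)$ mode-by-mode, which survives passage to the limit (again summing the $(2p)$-th moments with the splitting argument) to establish continuity directly. I would define $\int_0^t B(\psi_\tau, \psi_\tau)\,d\tau$ to be this limit process.

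To finish, choose a subsequence $M_n \uparrow \infty$ such that $M_n^{2-2p\delta} \leq 2^{-n}$; Chebyshev applied to the moment bound above, combined with the embedding $\mathcal{F}L^{2p,0} \hookrightarrow \mathcal{F}L^{\infty,0}$, makes $\mathbb{P}(\sup_t \|G_{M_{n+1}} - G_{M_n}\|_{\mathcal{F}L^{\infty,0}} > 2^{-n/(4p)})$ summable, and Borel--Cantelli yields almost-sure convergence of $G_{M_n}$ in $C([0,T]; \mathcal{F}L^{\infty,0})$; uniqueness of the limit then promotes this to the full sequence. The main obstacle is reconciling the two decay regimes: the stated version of (c) only decays in $|k|$ and does not by itself deliver Cauchyness, so one must recover the sharper $M^{-\delta}$ bound implicit in the proof of Proposition \ref{ExponentialBounds} in order to split at $|k| \sim M$. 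The critical threshold $p > \delta^{-1}$ emerges naturally here as the exact condition making both the tail sum $\sum_{|k|>M} |k|^{-2p\delta}$ and the low-frequency contribution $\sum_{|k|\leq M} M^{-2p\delta} \sim M^{2-2p\delta}$ go to zero as $M \to \infty$.
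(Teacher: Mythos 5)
Your proposal is correct and follows essentially the same route as the paper: mode-by-mode application of Proposition \ref{GBounds} (with the sharper $M^{-\delta}$ decay for the difference coming from the second part of Proposition \ref{ExponentialBounds}), summation of the $2p$-th powers in $k$ to get a Cauchy bound of order $T^p M^{2-2p\delta}$ in $L^{2p}_\rho C([0,T],\mathcal{F}L^{2p,0})$, with the threshold $p>\delta^{-1}$ arising exactly as in the paper, followed by the embedding into $\mathcal{F}L^{\infty,0}$ and a Borel--Cantelli upgrade. If anything you are more careful than the paper at low frequencies, where its sum restricted to $|k|\ge M$ tacitly relies on the same $M^{-\delta}$ refinement you make explicit; the only soft spot is the final ``uniqueness of the limit promotes to the full sequence'' step, which is cleaner if you instead apply Chebyshev at fixed $\epsilon$ to the full-sequence bound $\mathbb{E}\sup_t\|G-G_M\|^{2p}\precsim M^{2-2p\delta}$ with $p$ large enough to make it summable in $M$.
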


\begin{proof}
Note that by Proposition \ref{GBounds} (c) with $t_2 = T$ and $t_1 = 0$ we have when $M < N$ that
$$\left\| \sup_{0 \leq t \leq T} \left|\int_0^t B_N(\psi_\tau)_k d\tau - \int_0^t B_M(\psi_\tau)_k d\tau \right|\right\|_{L^{2p}(\rho)} \precsim |k|^{-\delta}T^\frac12.$$
Then we can calculate that there is a random time $t_* = t_*(N, M, \epsilon) \in [0, T]$ for which we can estimate whenever $p\delta > 1$ that
\begin{align*}
& \quad\;\left\|\,\sup_{t \in [0, T]} \left\| \int_0^t B_N(\psi^{(N)}_\tau)_k d\tau - \int_0^t B_M(\psi^{(M)}_\tau)_k d\tau \right\|_{\ell^{2p}} \right\|_{L^{2p}(\rho)}^{2p} \\
& \leq \left\|\,\left\| \int_0^{t_*} B_N(\psi^{(N)}_\tau)_k d\tau - \int_0^{t_*} B_M(\psi^{(M)}_\tau)_k d\tau \right\|_{\ell^{2p}} + \epsilon \right\|_{L^{2p}(\rho)}^{2p} \\
& \leq 2^{2p}\left(\left\|\,\left\| \int_0^{t_*} B_N(\psi^{(N)}_\tau)_k d\tau - \int_0^{t_*} B_M(\psi^{(M)}_\tau)_k d\tau \right\|_{\ell^{2p}} \right\|_{L^{2p}(\rho)}^{2p} + \epsilon^{2p}\right) \\
& = 2^{2p}\left(\left\|\,\left\| \int_0^{t_*} B_N(\psi^{(N)}_\tau)_k d\tau - \int_0^{t_*} B_M(\psi^{(M)}_\tau)_k d\tau \right\|_{L^{2p}(\rho)} \right\|_{\ell^{2p}}^{2p} + \epsilon^{2p} \right)\\
& \leq 2^{2p}T^\frac12 \sum_{|k| \geq M} |k|^{-2p\delta} + 2^{2p}\epsilon^{2p} \\
& \precsim 2^{2p} T^\frac12 \frac{M^{-2p\delta + 2}}{2p\delta - 2} + 2^{2p}\epsilon^{2p},
\end{align*}
from which we have
\begin{align*}
\|G_N(\cdot) - G_M(\cdot)\|_{L^{2p}(\rho)C^0_t\mathcal{F}L^{2p, 0}} & \leq 2T^\frac{1}{4p} M^{-\delta + \frac{1}{p}} + 2\epsilon \\
& \leq 2(1 + T)^\frac{1}{8\delta} M^{-\frac{\delta}{2}} + 2\epsilon \\
& \leq 3\epsilon,
\end{align*}
provided we choose $M$ sufficiently large depending on $\delta$, $T$, and $\epsilon$.
\end{proof}

\section{Extraction of a Candidate Solution}

From our sequence of approximate solutions we would like to extract a candidate solution using compactness.  We will need to use the fact that the approximate solutions $\psi^{(N)}$ satisfy the following two Duhamel integral equations:
\begin{align}\label{DuHamelFormulation}
\psi^{(N)}_t & = \psi^{(N)}_0 + \int_0^t -|D|^{2\delta}\psi^{(N)}_\tau + |D|^\delta W_\tau \notag \\
& = e^{-|D|^{2\delta}t}\psi_0 + \int_0^t e^{-|D|^{2\delta}(t - \tau)} B_N(\psi^{(N)}_\tau, \psi^{(N)}_\tau)\, dt + \int_0^t e^{-|D|^{2\delta}(t - \tau)} |D|^{\delta} dW_\tau.
\end{align}
Our first step is to establish tightness of the sequence of approximate solutions.

\begin{proposition}
Let $\delta > 0$.  Denote $$\mathcal{A}^{(N)}_t := G_N(t) = \int_0^t B_N(\psi^{(N)}_\tau, \psi^{(N)}_\tau) \, dt$$ and $$\tilde{\mathcal{A}}^{(N)}_t := \tilde{G}_N(t) = \int_0^t e^{-|D|^{2\delta}(t - \tau)} B_N(\psi^{(N)}_\tau, \psi^{(N)}_\tau)\, dt.$$ Suppose that the initial data $\psi_0 \in \mathcal{F}L^{\infty, 2\delta - 3\delta\epsilon}$.  Then for all sufficiently small $\epsilon$ depending on $\delta$, the sequence $(\psi^{(N)}, \mathcal{A}^{(N)}, \tilde{\mathcal{A}}^{(N)})$ is tight in $C([0, T] : \mathfrak{X})$, where $$\mathfrak{X} = \mathcal{F}L^{\infty, (1 - \epsilon) \wedge (2\delta - 3\delta\epsilon)} \times \mathcal{F}L^{\infty, \delta - 2\delta\epsilon} \times \mathcal{F}L^{\infty, 2\delta - 3\delta\epsilon}.$$
\end{proposition}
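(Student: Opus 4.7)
The strategy is a Kolmogorov--Chentsov tightness argument applied marginally. Since tightness of the joint law of $(\psi^{(N)}, \mathcal{A}^{(N)}, \tilde{\mathcal{A}}^{(N)})$ on a product of Polish spaces is equivalent to tightness of each marginal, it suffices to show that each of the three processes is tight as a $C([0,T])$-valued process in its respective Fourier--Lebesgue factor of $\mathfrak{X}$. For each marginal the plan is to establish uniform-in-$N$ moment bounds of the form
\begin{equation*}
\sup_{N,\,t}\mathbb{E}\|X^{(N)}_t\|_{\mathcal{F}L^{\infty,\,s_{+}}}^{2p} < \infty, \qquad \sup_N \mathbb{E}\|X^{(N)}_{t_2}-X^{(N)}_{t_1}\|_{\mathcal{F}L^{\infty,\,s_{+}}}^{2p} \lesssim |t_2-t_1|^{1+\alpha},
\end{equation*}
for a space $\mathcal{F}L^{\infty,\,s_{+}}$ strictly more regular than the target regularity $s$. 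Since bounded sets in $\mathcal{F}L^{\infty,\,s_{+}}$ have uniformly vanishing tails when measured with the weight $|k|^{s}$, the embedding $\mathcal{F}L^{\infty,\,s_{+}}\hookrightarrow \mathcal{F}L^{\infty,\,s}$ is compact by a diagonal-extraction argument, and these two estimates combine in standard Kolmogorov--Chentsov fashion to produce tightness in $C([0,T]; \mathcal{F}L^{\infty,\,s})$.

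For $\mathcal{A}^{(N)}$, apply Proposition \ref{GBounds}. Bound (b) yields $\mathbb{E}|G_N(t)_k|^{2p}\lesssim |k|^{-2p\delta}T^p$ uniformly in $N,t$, and the elementary inequality $\|f\|_{\mathcal{F}L^{\infty,\,s}}^{2p}\le \sum_k |k|^{2ps}|f_k|^{2p}$ gives a uniform bound in $\mathcal{F}L^{\infty,\,\delta-\eta}$ whenever $\sum_k |k|^{2p(s-\delta)}$ converges, i.e. $s<\delta-1/p$; choosing $p$ large enough depending on $\delta$ and $\epsilon$ places this strictly above the target $\mathcal{F}L^{\infty,\,\delta-2\delta\epsilon}$. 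The temporal modulus comes from interpolating bound (b) (uniform in time) with bound (d) (uniform in $k$): for any $\theta\in(0,1)$,
\begin{equation*}
\|G_N(t_2)_k-G_N(t_1)_k\|_{L^{2p}(\rho)} \lesssim |k|^{-\delta(1-\theta)}|t_2-t_1|^{\theta(1+2\delta)/(2+2\delta)},
\end{equation*}
and summing with weight $|k|^{2p(\delta-2\delta\epsilon)}$ for $\theta\sim\epsilon$ and $p$ sufficiently large yields the required Kolmogorov increment $\lesssim |t_2-t_1|^{1+\alpha}$.

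The identical scheme applied to Proposition \ref{FinerEstimatesSemigroup} produces tightness of $\tilde{\mathcal{A}}^{(N)}$ in $C([0,T];\mathcal{F}L^{\infty,\,2\delta-3\delta\epsilon})$: the parabolic smoothing in bound (b) supplies an extra factor of $|k|^{-\delta}$, shifting the achievable regularity from $\delta-\eta$ to $2\delta-\eta$. For $\psi^{(N)}$, use the Duhamel decomposition
\begin{equation*}
\psi^{(N)}_t = e^{-|D|^{2\delta}t}\psi_0 + \tilde{\mathcal{A}}^{(N)}_t + Z^{(N)}_t, \qquad Z^{(N)}_t := \int_0^t e^{-|D|^{2\delta}(t-\tau)}|D|^{\delta-1}\Pi_N\,dW_\tau,
\end{equation*}
and estimate the three pieces separately. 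The deterministic term lies in $C([0,T];\mathcal{F}L^{\infty,\,2\delta-3\delta\epsilon})$ by contractivity of the heat semigroup, with the standard estimate $\|(e^{-|D|^{2\delta}(t-s)}-I)f\|_{\mathcal{F}L^{\infty,\,\sigma}}\lesssim |t-s|^\alpha\|f\|_{\mathcal{F}L^{\infty,\,\sigma+2\delta\alpha}}$ furnishing temporal regularity. The nonlinear piece $\tilde{\mathcal{A}}^{(N)}$ was handled above. The stochastic convolution $Z^{(N)}$ is a centered Gaussian on each mode $k$ with variance bounded by $|k|^{-2}$ uniformly in $N,t$, so $\mathbb{E}|Z^{(N)}_t(e_k)|^{2p}\lesssim_p |k|^{-2p}$ giving uniform boundedness in $\mathcal{F}L^{\infty,\,1-\eta}$, and the temporal Hölder control follows from a direct Itô-isometry computation of $\mathbb{E}|Z^{(N)}_{t_2}(e_k)-Z^{(N)}_{t_1}(e_k)|^2$. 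Combining the three pieces yields the target regularity $(1-\epsilon)\wedge(2\delta-3\delta\epsilon)$.

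The principal technical obstacle is the passage from per-frequency $L^{2p}$ moment bounds (the natural output of Propositions \ref{GBounds} and \ref{FinerEstimatesSemigroup}) to uniform-in-$k$ estimates in $\mathcal{F}L^{\infty}$, since bounding $\sup_k$ by the full $\ell^{2p}$ sum incurs a regularity loss of order $1/p$. This loss is absorbed by the slack encoded in the $\delta\epsilon$ factors in the target spaces, and is precisely why $p$ must be chosen sufficiently large depending on both $\delta$ and $\epsilon$.
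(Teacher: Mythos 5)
Your proposal is correct and follows essentially the same route as the paper: marginal tightness via Kolmogorov-type moment criteria on time increments built from Propositions \ref{GBounds} and \ref{FinerEstimatesSemigroup}, with the $1/p$ loss from converting per-mode $L^{2p}(\rho)$ bounds into $\sup_k$ bounds absorbed by the $\delta\epsilon$ slack, together with the Duhamel decomposition of $\psi^{(N)}$ into initial-data semigroup, $\tilde{\mathcal{A}}^{(N)}$, and the stochastic convolution handled by Gaussian moment/Kolmogorov continuity estimates. The only cosmetic difference is that you interpolate (b) with (d) and pass through a compact embedding from a strictly more regular $\mathcal{F}L^{\infty}$ space, whereas the paper sums the increment bounds directly in weighted $\mathcal{F}L^{2p}$ spaces; the underlying mechanism is identical.
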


\begin{proof}
By Proposition \ref{FinerEstimatesSemigroup}(d), we have
\begin{align*}
\|\tilde{\mathcal{A}}^{(N)}_t - \tilde{\mathcal{A}}^{(N)}_{t^\prime}\|_{L^{2p}_\rho \mathcal{F}L^{2p, 2\delta - 3\delta\epsilon}}^{2p} & = \mathbb{E}_\rho\left(\sum_{k \in \mathbb{Z}^2_0} |k|^{(2\delta - 3\delta\epsilon)p}|\tilde{\mathcal{A}}^{(N)}_t - \tilde{\mathcal{A}}^{(N)}_{t^\prime}|^p\right) \\
& \leq |t - t^\prime|^{\epsilon p} \sum_{k \in \mathbb{Z}^2_0} |k|^{-\delta\epsilon p} \\
& \leq C|t - t^\prime|^{\epsilon p},
\end{align*}
if we choose $p$ sufficiently large depending on $\delta$ and $\epsilon$. By the classical Arzela-Ascoli compactness criterion in abstract Weiner spaces, this demonstrates tightness of $(\tilde{\mathcal{A}}^{(N)})$ in $C([0, T] : \mathcal{F}L^{2p, 2\delta - 3\delta\epsilon})$ for arbitrarily large $p$, and therefore also in $C([0, T] : \mathcal{F}L^{\infty, 2\delta - 3\delta\epsilon})$.  Similarly, Proposition \ref{GBounds}(d) yields the estimate $\|\mathcal{A}^{(N)}_t - \mathcal{A}^{(N)}_{t^\prime}\|_{L^{2p}_\rho \mathcal{F}L^{2p, \delta - 2\delta\epsilon}}^{2p} \leq |t - t^\prime|^\frac12$ whenever $|t - t^\prime|$ is sufficiently small, from which we find that $(\mathcal{A}^{(N)})$ is tight in $C([0, T] : \mathcal{F}L^{\infty, \delta - 2\delta\epsilon})$.

By applying the Kolmogorov continuity criterion (c.f. \cite{Kuo}), a routine calculation shows that the Wiener integral $$\int_0^t e^{-|D|^{2\delta}(t - \tau)} |D|^{\delta - 1} dW_\tau$$ is in $C([0, T] : \mathcal{F}L^{\infty, 1 - \epsilon})$ for some $\epsilon > 0$ sufficiently small.  But now applying all of these estimates to \eqref{DuHamelFormulation} implies that $(\psi^{(N)})$ is tight in 
$C([0, T] : \mathcal{F}L^{\infty, 2\delta - 3\delta\epsilon})$ provided we take $1 > 2\delta$ and $\epsilon > 0$ sufficiently small.

\end{proof}

Now, using the Prokhorov and Skorohod Lemmas, there are up to a change in the underlying probability space stochastic processes $(\psi, \mathcal{A}, \tilde{\mathcal{A}})$ with values in $C([0, T] : \mathfrak{X})$ so that upon passing to a subsequence, $(\psi^{(N)}, \mathcal{A}^{(N)}, \tilde{\mathcal{A}}^{(N)}) \to (\psi, \mathcal{A}, \tilde{\mathcal{A}})$ almost surely.  To finish our proof of existence of solutions, we need only verify that this candidate satisfies the definition of energy solution.

\begin{proposition}\label{Existence}
Let $\delta > 0$ and $T > 0$ be given.  There exists a Wiener process $W$ on $L^2$ for which the processes $(\psi, \mathcal{A}, \tilde{\mathcal{A}})$ satisfy the relations
\begin{equation}
\psi_t = \psi_0 + \mathcal{A}_t - \int_0^t |D|^{2\delta} \psi_\tau \, d\tau + |D|^{\delta - 1} W_t 
\end{equation}
in the sense of $C([0, T] : \mathcal{F}L^{\infty, -3\epsilon\delta})$, as well as
\begin{equation}
\psi_t = e^{-|D|^{2\delta}t}\psi_0 + \tilde{\mathcal{A}}_t + \int_0^t e^{-|D|^{2\delta}(t - \tau)} |D|^{\delta - 1} dW_\tau 
\end{equation}
in the sense of $C([0, T] : \mathcal{F}L^{\infty, (1 - \epsilon) \wedge (2\delta - 3\epsilon\delta)})$, and moreover we have in the distributional sense that
\begin{equation}
\tilde{\mathcal{A}}_t = \int_0^t e^{-|D|^{2\delta}(t - \tau)} d\mathcal{A}_\tau.
\end{equation}
\end{proposition}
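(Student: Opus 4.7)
The plan is to start from the fact that each truncated process $\psi^{(N)}$ satisfies \eqref{PsiNEquation} exactly by construction, which yields the pre-limit identities
\begin{equation*}
\psi^{(N)}_t = \psi^{(N)}_0 + \mathcal{A}^{(N)}_t - \int_0^t |D|^{2\delta}\psi^{(N)}_\tau\,d\tau - |D|^{\delta-1}\Pi_N W_t
\end{equation*}
and (by Duhamel) the analogous semigroup form with $\tilde{\mathcal{A}}^{(N)}_t$, as well as the exact relation $\tilde{\mathcal{A}}^{(N)}_t = \int_0^t e^{-|D|^{2\delta}(t-\tau)}\,d\mathcal{A}^{(N)}_\tau$ by interchanging the two time integrals in the definition of $\tilde{G}_N$. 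The Skorohod almost sure convergence $(\psi^{(N)},\mathcal{A}^{(N)},\tilde{\mathcal{A}}^{(N)})\to(\psi,\mathcal{A},\tilde{\mathcal{A}})$ in $C([0,T]:\mathfrak{X})$ lets me pass to the limit in every term of these identities except the Wiener/stochastic integral term, which is what must be constructed.

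First I would pass to the limit in everything deterministic/drift by pure functional-analytic arguments: uniform convergence of $\psi^{(N)}$ in $\mathcal{F}L^{\infty,(1-\epsilon)\wedge(2\delta-3\delta\epsilon)}$ and the smoothing action of $e^{-|D|^{2\delta}(t-\tau)}$ (together with the uniform $\mathcal{F}L^{\infty,\delta-2\delta\epsilon}$ bound on $\mathcal{A}^{(N)}$ from Proposition \ref{GBounds}) give the limit Duhamel identity for the drift term, and in particular identify
\begin{equation*}
\tilde{\mathcal{A}}_t = \int_0^t e^{-|D|^{2\delta}(t-\tau)}\,d\mathcal{A}_\tau
\end{equation*}
by Lebesgue dominated convergence in the time-integrated convolution; here the regularity estimates of Proposition \ref{FinerEstimatesSemigroup} furnish the required domination.

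Next I would define the residual
\begin{equation*}
M^{(N)}_t := \psi^{(N)}_t - \psi^{(N)}_0 - \mathcal{A}^{(N)}_t + \int_0^t |D|^{2\delta}\psi^{(N)}_\tau\,d\tau = -|D|^{\delta-1}\Pi_N W_t,
\end{equation*}
which is a continuous Gaussian martingale in each fixed Fourier mode $k$ with quadratic variation $|k|^{2\delta-2}t$. By the previous step, $M^{(N)}_t\to M_t := \psi_t - \psi_0 - \mathcal{A}_t + \int_0^t|D|^{2\delta}\psi_\tau\,d\tau$ almost surely in $C([0,T]:\mathcal{F}L^{\infty,-3\delta\epsilon})$, and the uniform $L^2(\rho)$ bounds on each Fourier mode give uniform integrability, so in the limit each Fourier component $M_t(e_k)$ is a continuous martingale (with respect to the filtration generated by $\psi$) with quadratic variation $|k|^{2\delta-2}t$. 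By Lévy's characterization theorem, mode-by-mode, the process $|D|^{1-\delta}M_t$ defines a cylindrical $L^2$-valued Wiener process $-W_t$ (enlarging the Skorohod probability space if necessary to carry the sign and independence structure), producing the first identity of the proposition in the stated regularity. The second identity then follows by applying $e^{-|D|^{2\delta}t}$ to the first and using the already-established relation between $\mathcal{A}$ and $\tilde{\mathcal{A}}$, with the stochastic convolution against $W$ recovered by a straightforward Itô / stochastic Fubini argument in each Fourier mode.

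The main obstacle is identifying the Wiener process in the limit: after Skorohod we no longer have access to the original driving noise, so $W$ must be reconstructed from the martingale part of the limit equation. The key technical content is showing that $M_t$ is genuinely a continuous martingale (not merely an almost sure limit of martingales) with the correct quadratic variation in every mode simultaneously, which requires uniform $L^p(\rho)$ control and the independence of the Fourier components of the Ornstein–Uhlenbeck part $z^{(N)}$ of the approximate solutions.
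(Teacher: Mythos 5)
Your proposal is correct in outline and follows the same basic strategy as the paper: exploit the exact pre-limit identities for $\psi^{(N)}$, pass to the limit termwise along the almost surely convergent Skorohod subsequence, and use the drift estimates of Propositions \ref{GBounds} and \ref{FinerEstimatesSemigroup} to control the nonlinear contributions. Where you go beyond the paper is in the reconstruction of the driving noise: the paper's proof never explicitly builds $W$, whereas you make the residual $M^{(N)}_t$ explicit and recover $W$ from the limit via uniform integrability, identification of the quadratic variation mode by mode, and L\'evy's characterization; this is a legitimate and more complete treatment of a point the paper leaves implicit (note only that in \eqref{PsiNEquation} the noise is \emph{not} projected, so $M^{(N)}_t = -|D|^{\delta-1}W_t$ rather than $-|D|^{\delta-1}\Pi_N W_t$ — harmless, but worth fixing). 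Where you are weaker than the paper is the identity $\tilde{\mathcal{A}}_t = \int_0^t e^{-|D|^{2\delta}(t-\tau)}\,d\mathcal{A}_\tau$: since the limit $\mathcal{A}$ is not known to be of bounded variation, "dominated convergence in the time-integrated convolution" does not by itself define the right-hand side. The paper resolves this by observing from Proposition \ref{GBounds}(d) that $\mathcal{A}$ has H\"older paths of exponent $\frac{1+2\delta}{2+2\delta} > \frac12$ and that $\mathcal{A}^{(N)} \to \mathcal{A}$ in $C^{\frac12^-}$, so the integral makes sense (and the convergence holds) in the sense of Young; alternatively you could integrate by parts in $\tau$ in each Fourier mode to rewrite the convolution in terms of $\mathcal{A}_\tau(e_k)$ itself and then use uniform convergence. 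You should also be aware that the paper's proof does additional work not strictly demanded by the statement — the $D_1+D_2$ splitting identifying the limiting drift with $\int_0^t B(\psi_\tau)\,d\tau$ (the energy-solution compatibility condition) and the zero quadratic variation of $\mathcal{A}$ — so your proposal, supplemented by the Young-integral point above, covers the stated identities but not those extra verifications.
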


\begin{proof}
We take as our candidate solution the triple $(\psi, \mathcal{A}, \tilde{\mathcal{A}})$ constructed above as the limit of the subsequence $(\psi^{(N)}, \mathcal{A}^{(N)}, \tilde{\mathcal{A}}^{(N)})$.  We have already shown by construction that $\psi^{(N)} \to \psi$ in $C([0, T] : \mathcal{F}L^{\infty, 2\delta - 3\delta\epsilon})$.  We next consider the contribution from the Duhamel drift term
\begin{align*}
& \quad \int_0^t B_N(\psi^{(N)}_\tau, \psi^{(N)}_\tau)\, dt - \int_0^t B_M(\psi_\tau, \psi_\tau)\, dt \\
& = \int_0^t B_N(\psi^{(N)}_\tau, \psi^{(N)}_\tau) - B_M(\psi^{(N)}_\tau, \psi^{(N)}_\tau) \, dt \\
& \; + \int_0^t B_M(\psi^{(N)}_\tau, \psi^{(N)}_\tau) - B_M(\psi_\tau, \psi_\tau) \, dt \\
& := D_1 + D_2.
\end{align*}
We take $\lim_{M \to \infty} \limsup_{N \to 
\infty}$ and may assume without loss of generality that $M < N$.  Now since the functional $B_M$ in $D_2$ depends on finitely many frequency components, it is continuous as a mapping $\mathcal{F}L^{\infty, 2\delta - 3\delta\epsilon} \to \mathcal{F}L^{\infty, 2\delta - 3\delta\epsilon}$.  Hence we have that $\|D_2\|_{\mathcal{F}L^{\infty, 2\delta - 3\delta\epsilon}} \to 0$ as $N \to \infty$, since $\psi^{(N)} \to \psi$.  Next, estimating as in Proposition \ref{ConstructDrift}, we have
\begin{align*}
& \quad \lim_{M \to \infty} \limsup_{N \to \infty} \left\| \left\|\int_0^t e^{-|D|^{2\delta}(t - \tau)} \left(B_N(\psi^{(N)}_\tau, \psi^{(N)}_\tau)_k - B_M(\psi^{(N)}_\tau, \psi^{(N)}_\tau)_k\right) \, d\tau \right\|_{C\mathcal{F}L^{\infty, 2\delta - 3\delta\epsilon}} \right\|_{L^p(\mathbb{P})} = 0
\end{align*}
so that, after possibly passing to another subsequence, $D_1 \to 0$ almost surely in $C\mathcal{F}L^{\infty, 2\delta - 3\delta\epsilon}  $.  We observe that by construction $\mathcal{A}_0 = 0$ and by Proposition \ref{GBounds}(d), $\mathcal{A}_t$ has paths of H\"older class $\frac{1 + 2\delta}{2 + 2\delta} > \frac12$ in time when $\delta > 0$, and so $\mathcal{A}_t$ has zero quadratic variation.  Finally, we note that since each $\mathcal{A}^{(N)}_\tau$ converges to $\mathcal{A}_\tau$ in $C^{\frac12^-}([0, T] : \mathcal{F}L^{\infty, 2\delta - 3\delta\epsilon})$, we have for each $k \in \mathbb{Z}_0^2$ that
\begin{align*}
\left\langle \int_0^t e^{-|D|^{2\delta}(t - \tau)} d\mathcal{A}_\tau^{(N)}, e_k \right\rangle \to \left\langle \int_0^t e^{-|D|^{2\delta}(t - \tau)} d\mathcal{A}_\tau, e_k \right\rangle,
\end{align*}
where the above integrals are taken to be in the sense of Young.
\end{proof}

\section{Pathwise Uniqueness in the Highly Diffusive Case}

We record here a pathwise uniqueness result for \eqref{SQGstreamline} with sufficiently strong diffusion, following the argument of \cite{gj}.  We take uniqueness in the following sense.

\begin{definition}\label{PathwiseUniquenessDefinition}
(Pathwise Uniqueness)  Solutions to \eqref{SQGstreamline} are pathwise unique in the class $X$ when, given two energy solutions $\psi, \tilde{\psi}$ defined on a common probability space $(\Omega, \mathfrak{F}, \mathbb{P})$ that generate the same Brownian motion $W$, for which $\psi(0) = \tilde{\psi}(0)$ almost surely, the set $$\left\{ \sup_{t > 0} \|\psi_t - \tilde{\psi}_t\|_{X} > 0\right\}$$ is of $\mathbb{P}$-measure zero.
\end{definition}

Once pathwise uniqueness is established for mean-zero solutions to \eqref{SQGstreamline}, it follows also for mean zero solutions of \eqref{SQGTheta}.  Our uniqueness result is then given in

\begin{proposition}
For $\delta > \frac32$ fixed, let $\epsilon > 0$ be chosen sufficiently small so that the conclusion of Proposition \ref{FinerEstimatesSemigroup} holds.  Then energy solutions (in the sense of Definitions \ref{ControlledProcessDefinition}-\ref{EnergySolutionDefinition}) to \eqref{SQGstreamline} are pathwise unique in the class $\mathcal{F}L^{\infty, 2\delta - 3\delta\epsilon}$.  
\end{proposition}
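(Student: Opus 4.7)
The plan is to set $w = \psi - \tilde\psi$ and derive a self-contained Duhamel equation for $w$ alone, then close it in $\mathcal{F}L^{\infty, s}$ with $s := 2\delta - 3\delta\epsilon$ by combining a bilinear estimate for $B$ with the parabolic smoothing of $e^{-|D|^{2\delta}t}$. The structural observation driving the argument is that, although $\psi$ and $\tilde\psi$ individually only live at the lower regularity $\mathcal{F}L^{\infty, 1-\epsilon}$, the difference $w$ inherits the higher regularity $\mathcal{F}L^{\infty, s}$ of the drift term: the low-regularity contributions from the stochastic convolution and the initial data cancel out exactly.

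From the mild formulation of Proposition \ref{Existence}, since the two solutions share $W$ and $\psi_0$, subtracting and invoking bilinearity and symmetry of $B$ yields
\begin{equation*}
w_t = \int_0^t e^{-|D|^{2\delta}(t - \tau)} B\bigl(\psi_\tau + \tilde\psi_\tau,\, w_\tau\bigr)\, d\tau
\end{equation*}
in the sense of $C([0, T]:\mathcal{F}L^{\infty, s})$. The identity $d\mathcal{A}_\tau = B(\psi_\tau, \psi_\tau)\, d\tau$ underlying this step is inherited from the Galerkin limit of Proposition \ref{ConstructDrift} together with the manifest bilinearity $B_N(\psi) - B_N(\tilde\psi) = B_N(\psi + \tilde\psi,\, \psi - \tilde\psi)$ at the truncated level, which passes through the limit under the convergence already established there.

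The key bilinear bound I would establish is
\begin{equation*}
\|B(v, w)\|_{\mathcal{F}L^{\infty, s - 3 - \epsilon}} \leq C \|v\|_{\mathcal{F}L^{\infty, 1-\epsilon}} \|w\|_{\mathcal{F}L^{\infty, s}}.
\end{equation*}
The derivation exploits the elementary identity $\bigl||h_1| - |h_2|\bigr| = |k|\,|h_1 - h_2|/(|h_1| + |h_2|) \leq |k|$ valid under the convolution constraint $h_1 + h_2 = k$, which upgrades the naive coefficient bound to $|\alpha_{h_1, h_2, k}| \leq |h_1||h_2|$. Splitting the convolution sum into the low-high region $|h_1| \leq |h_2| \sim |k|$ and the high-high region $|h_1| \sim |h_2| \gg |k|$, both contributions evaluate to $\lesssim |k|^{3 - 2\delta + \epsilon(1 + 3\delta)}$; crucially, the high-high region is summable precisely when $s + (1 - \epsilon) > 4$, equivalently $\delta > \tfrac32$ for small $\epsilon$.

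Combining the bilinear estimate with the smoothing bound $\|e^{-|D|^{2\delta}t}f\|_{\mathcal{F}L^{\infty, r + a}} \lesssim t^{-a/(2\delta)}\|f\|_{\mathcal{F}L^{\infty, r}}$ applied at $a = 3 + \epsilon$ (time-integrable exactly when $\delta > (3+\epsilon)/2$), together with the almost surely finite quantity $V := \sup_{\tau \leq T}\|\psi_\tau + \tilde\psi_\tau\|_{\mathcal{F}L^{\infty, 1-\epsilon}}$, produces
\begin{equation*}
\sup_{t \leq T_0}\|w_t\|_{\mathcal{F}L^{\infty, s}} \leq C_\delta\, V\, T_0^{\,1 - (3 + \epsilon)/(2\delta)} \sup_{t \leq T_0}\|w_t\|_{\mathcal{F}L^{\infty, s}},
\end{equation*}
where the exponent on $T_0$ is strictly positive for $\delta > \tfrac32$. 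Choosing a path-dependent $T_0 > 0$ with $C_\delta V T_0^{1-(3+\epsilon)/(2\delta)} < 1$ forces $w \equiv 0$ on $[0, T_0]$ almost surely, and a finite iteration along subintervals of $[0, T]$ extends this to the full interval, yielding pathwise uniqueness in the sense of Definition \ref{PathwiseUniquenessDefinition}.

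The main obstacle is the bilinear estimate, where the threshold $\delta > \tfrac32$ enters sharply in two places simultaneously: the absolute summability of the high-high frequency convolution (a spatial condition) and the time-integrability of the parabolic smoothing exponent (a temporal condition) both pin down the same critical value. A secondary technical point, careful but conceptually routine, is verifying that the classical pointwise bilinear representation of $d\mathcal{A}_\tau$ holds in the $\mathcal{F}L^{\infty, s}$ sense for a general energy solution and not merely for the specific Galerkin-limit solution constructed in the existence proof.
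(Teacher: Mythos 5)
Your overall scheme --- exploit that the difference $w=\psi-\tilde\psi$ inherits the drift's regularity $\mathcal{F}L^{\infty,s}$, $s=2\delta-3\delta\epsilon$, because the noise and initial data cancel, and then close a pathwise Duhamel contraction --- is genuinely different from the paper's. The paper never writes an equation for the difference of two abstract energy solutions: it compares a given energy solution with the Galerkin approximations $\psi^{(N)}$, derives the inequality $\mathcal{D}_N\le\mathcal{I}_N\mathcal{D}_N+\Phi_N$, gets $\Phi_N\to0$ from the Ito-trick tail bounds plus Borel--Cantelli, and gets smallness of $\mathcal{I}_N$ for short times from stationarity and hypercontractivity; a continuity argument then covers $[0,T]$. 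Both routes need $s>3$, i.e.\ $\delta>\tfrac32$, for absolute summability of the frequency convolution, and both need the mild formulation and the Ito-trick regularity of the drift for the abstract solution.

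However, your key bilinear estimate is false as stated. You treat only the regions $|h_1|\le|h_2|\sim|k|$ and $|h_1|\sim|h_2|\gg|k|$, omitting the high-low region $|h_2|\ll|k|$, $|h_1|\sim|k|$, in which the rough factor $v\in\mathcal{F}L^{\infty,1-\epsilon}$ carries the frequency comparable to $k$ and the smooth difference carries the small frequency. There $|\alpha_{h_1,h_2,k}|\le|h_1||h_2|$ yields a contribution $\precsim\sum_{|h_2|\ll|k|}|k|^{\epsilon}|h_2|^{1-s}\precsim|k|^{\epsilon}$ (using $s>3$), and no further gain is available at the level of absolute values, since $h_1^\perp\cdot h_2=h_1^\perp\cdot k$ is genuinely of size $|k|\,|h_2|$ and $\bigl||h_1|-|h_2|\bigr|\sim|k|$ in that region. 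Hence the correct estimate is only $\|B(v,w)\|_{\mathcal{F}L^{\infty,-\epsilon}}\precsim\|v\|_{\mathcal{F}L^{\infty,1-\epsilon}}\|w\|_{\mathcal{F}L^{\infty,s}}$, not output regularity $s-3-\epsilon$; because $s>3$ your claimed bound is strictly stronger and fails there. The argument survives after correction, but for a different reason than you give: to return to $\mathcal{F}L^{\infty,s}$ the semigroup must now supply $s+\epsilon$ derivatives, and the singularity $(t-\tau)^{-(s+\epsilon)/(2\delta)}$ is integrable because $s+\epsilon=2\delta-\epsilon(3\delta-1)<2\delta$, giving a contraction factor $C\,V\,T_0^{\epsilon(3\delta-1)/(2\delta)}$. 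In particular the temporal integrability does not independently enforce $\delta>\tfrac32$; only the spatial summability $s>3+\epsilon$ does, so your claim that the threshold enters ``sharply in two places'' is not accurate. Finally, the point you call routine --- that an arbitrary energy solution satisfies the mild formulation and that the drift difference equals the classical integral $\int_0^t B(\psi_\tau+\tilde\psi_\tau,w_\tau)\,d\tau$ --- genuinely requires the Ito-trick estimates of Propositions \ref{GBounds} and \ref{FinerEstimatesSemigroup} applied to each abstract controlled process, in order to know $w\in C([0,T]:\mathcal{F}L^{\infty,s})$ a priori before passing $B_N\to B$ by dominated convergence; the paper partially sidesteps this by keeping the truncated $B_N$ throughout and comparing against the Galerkin sequence itself.
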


Let $\psi, \tilde{\psi}$ be two energy solutions of \eqref{SQGstreamline} as in Definition \ref{PathwiseUniquenessDefinition} with $X = \mathcal{F}L^{\infty, (-\epsilon) \wedge (2\delta - 3\delta\epsilon)}$.  Set $s = 2\delta - 3\delta\epsilon$.  Now $\tilde{\psi}$ was constructed using a sequence of Galerkin approximations $\psi^{(N)}$.  We first claim that there exists a $T > 0$ depending on $p, \epsilon, \psi_0$ so that $\Pi_N(\psi_t - \psi_t^{(N)}) \to 0$ in the sense of $C([0, T] : \mathcal{F}L^{\infty, s})$ almost surely.  Once this is established, we will show pathwise uniqueness holds on arbitrarily long time intervals by a continuity argument.

\textbf{Step 1 : Pathwise uniqueness on some positive time interval.}  Denote $D_t^{(N)} = \Pi_N(\psi_t - \psi_t^{(N)})$ and $\mathcal{D}_N = \|D_t^{(N)}\|_{L^\infty_t \mathcal{F}L^{\infty, s}_x}$.  It now suffices to show that $\mathcal{D}_N \to 0$ as $N \to \infty$ for some $T > 0$.  The difference equation for $D_t^{(N)}$, in which the common Brownian motion $W$ and initial data contributions have cancelled almost surely, reads
\begin{align*}
D_t^{(N)} & = \int_0^t e^{-|D|^{2\delta}(t - \tau)}\Pi_N\left(B(\psi_\tau) - B_N(\psi_\tau^{(N)}\right) \, d\tau \\
& = \int_0^t e^{-|D|^{2\delta}(t - \tau)}\Pi_N\left(B(\psi_\tau) - B(\Pi_N \psi_\tau)\right) \, d\tau + \int_0^t e^{-|D|^{2\delta}(t - \tau)}\Pi_N\left(B(\Pi_N \psi_\tau) - B_N(\psi_\tau^{(N)})\right) \, d\tau \\
& = \int_0^t e^{-|D|^{2\delta}(t - \tau)}\Pi_N\left(B(\Pi_N \psi_\tau, \Pi_N \psi_\tau) - B_N(\psi_\tau^{(N)}, \psi_\tau^{(N)})\right) \, d\tau \\
& \quad + \Pi_N \int_0^t e^{-|D|^{2\delta}(t - \tau)}\left(B(\psi_\tau) - B_N(\psi_\tau)\right) \, d\tau \\
& := \int_0^t e^{-|D|^{2\delta}(t - \tau)}\left(B_N(\Pi_N \psi_\tau, D_\tau^{(N)}) + B_N(D_\tau^{(N)}, \psi_\tau^{(N)})\right) \, d\tau \\
& \quad + \Pi_N \varphi_t^{(N)},
\end{align*}
where here we have introduced
\begin{equation}
\varphi_t^{(N)} := \int_0^t e^{-|D|^{2\delta}(t - \tau)}\left(B(\psi_\tau) - B_N(\psi_\tau)\right) \, d\tau.
\end{equation}
By Propositions \ref{FinerEstimatesSemigroup}, (b) and (c), we can for sufficiently large $p$ bound the assumed solution $\varphi^{(N)}$ by
\begin{equation*}
\|\varphi_t^{(N)}(e_k)\|_{L_\mathbb{P}^{2p} L_t^{2p}} \leq C_p\max\left(|k|^{-2\delta}, |k|^{-\delta}N^{-\delta}\right),
\end{equation*}
from which we may interpolate to conclude that for any $0 \leq \theta \leq 1$ and sufficiently large $p$,
\begin{equation*}
\|\varphi_t^{(N)}(e_k)\|_{L_\mathbb{P}^{2p} L_t^{2p}} \leq C_p|k|^{-\delta(2 - \theta)}N^{-\delta\theta}.
\end{equation*}
If we set
\begin{equation*}
\Phi_N = \sup_{k \in \mathbb{Z}^2_0} |k|^{2\delta - 3\delta\epsilon} \|\varphi_t^{(N)}(e_k)\|_{L_t^{\infty}},
\end{equation*}
then we may bound $\Phi_N$ almost surely as follows: for sufficiently large $p$ we have
\begin{align*}
\mathbb{E}_\mathbb{P}\left(\sum_{N = 1}^\infty N\Phi_N^{2p}\right) & = \sum_{N = 1}^\infty N\mathbb{E}_\mathbb{P}(\Phi_N^{2p}) \\
& \leq \sum_{N = 1}^\infty N\mathbb{E}_\mathbb{P}\left(\sup_{k \in \mathbb{Z}^2_0} |k|^{2\delta - 4\delta\epsilon} \|\varphi_t^{(N)}(e_k)\|_{L_t^{\infty}}^p \right) \\
& = \sum_{N = 1}^\infty N\left\|\sup_{k \in \mathbb{Z}^2_0} |k|^{2\delta - 3\delta\epsilon} \|\varphi_t^{(N)}(e_k)\|_{L_t^{\infty}} \right\|_{L^{2p}_\mathbb{P}}^p \\
& \leq \sum_{N = 1}^\infty N \sup_{k \in \mathbb{Z}^2_0} \left\||k|^{2\delta - 3\delta\epsilon} \|\varphi_t^{(N)}(e_k)\|_{L_t^{\infty}} \right\|_{L^{2p}_\mathbb{P}}^p \\
& \leq C_p\sum_{N = 1}^\infty N^{-3\delta\epsilon p}
\end{align*}
which converges provided we choose $p$ sufficiently large depending on $\epsilon$ and $\delta$.  Thus almost surely we have $\Phi_N \leq C_p N^{-\frac{1}{p}}$.  The other term can be estimated as follows:
\begin{align*}
& \sup_{t \in [0, T]} \left|\left(\int_0^t e^{-|D|^{2\delta}(t - \tau)}\left(B_N(\Pi_N \psi_\tau, D_\tau^{(N)}) + B_N(D_\tau^{(N)}, \psi_\tau^{(N)})\right) \, d\tau\right)(e_k)\right| \\
& \leq \sup_{t \in [0, T]} \int_0^t e^{-|k|^{2\delta}(t - \tau)} \sum_{h \neq 0, k} |\alpha_{h, k - h, k}|\,\left(|\Pi_N\psi_\tau(e_h)|\,|D_\tau^{(N)}(e_{k - h})| + |D_\tau^{(N)}(e_h)|\,|\psi_\tau^{(N)}(e_{k - h})|\right)\, d\tau \\
& \leq \sup_{t \in [0, T]} \int_0^t e^{-|k|^{2\delta}(t - \tau)} \sum_{h \neq 0, k} |k - h|\,|h|\,\left(|\Pi_N\psi_\tau(e_h)|\,|D_\tau^{(N)}(e_{k - h})| + |D_\tau^{(N)}(e_h)|\,|\psi_\tau^{(N)}(e_{k - h})|\right)\, d\tau \\
& \leq \sup_{t \in [0, T]} \int_0^t e^{-|k|^{2\delta}(t - \tau)} \left(\sum_{h \neq 0, k} |k - h|\, |h|\,\left(|\Pi_N\psi_\tau(e_h)|\,|k - h|^{-s} + |h|^{-s}\,|\psi_\tau^{(N)}(e_{k - h})|\right)\right) \mathcal{D}_N\, d\tau \\
& := \mathcal{D}_N|k|^{-s}\mathcal{I}_N(k, T),
\end{align*}
where we have introduced
\begin{equation*}
\mathcal{I}_N(k, T) := \sup_{k \in \mathbb{Z}^2_0} |k|^{s} \sup_{t \in [0, T]} \int_0^t e^{-|k|^{2\delta}(t - \tau)} \left(\sum_{h \neq 0, k} |h|\, |k - h|\left(|\Pi_N\psi_\tau(e_h)|\,|k - h|^{-s} + |h|^{-s}\,|\psi_\tau^{(N)}(e_{k - h})|\right)\right)\, d\tau.
\end{equation*}
But then it follows that we have for all $N$ that
\begin{equation}\label{DiffInequality}
\mathcal{D}_N \leq \mathcal{I}_N(k, T)\mathcal{D}_N + \Phi_N.
\end{equation}
We continue to estimate $\mathcal{I}_N$; if we choose $s > 3$, then using H\"older's inequality with respect to $|k - h|^{1 - s} dt \, dh$ and $|h|^{1 - s} dt \, dh$ respectively yields
\begin{align*}
\mathcal{I}_N & \leq \sup_{k \in \mathbb{Z}^2_0} C_{p, \epsilon} |k|^{s} \sup_{t \in [0, T]} \left(\int_0^t e^{-p^\prime|k|^{2\delta}(t - \tau)} \, d\tau\right)^{\frac{1}{p^\prime}} \\
& \qquad \qquad \times \left(\int_0^t \sum_{h \neq 0, k} \frac{|h|^p \,|\Pi_N\psi_\tau(e_h)|^p}{|k - h|^{s - 1}} + \sum_{h \neq 0, k} \frac{|k - h|^p\,|\psi_\tau^{(N)}(e_{k - h})|^p}{|h|^{s - 1}} \, d\tau\right)^\frac{1}{p}\\
& \leq \sup_{k \in \mathbb{Z}^2_0} C_{p, \epsilon}|k|^{s - 2\delta/p^\prime} \sup_{t \in [0, T]} \left(\int_0^t \sum_{h \neq 0, k} \frac{|h|^p \,|\Pi_N\psi_\tau(e_h)|^p}{|k - h|^{s - 1}} + \sum_{h \neq 0, k} \frac{|k - h|^p\,|\psi_\tau^{(N)}(e_{k - h})|^p}{|h|^{s - 1}} \, d\tau\right)^\frac{1}{p} \\
& \leq \sup_{k \in \mathbb{Z}^2_0} C_{p, \epsilon} \sup_{t \in [0, T]} \left(\int_0^t \sum_{h \neq 0, k} \frac{|h|^p \,|\Pi_N\psi_\tau(e_h)|^p}{|k - h|^{s - 1}} + \sum_{h \neq 0, k} \frac{|k - h|^p\,|\psi_\tau^{(N)}(e_{k - h})|^p}{|h|^{s - 1}} \, d\tau\right)^\frac{1}{p},
\end{align*}
where we have chosen $p$ sufficiently large so that $s - 2\delta/p^\prime \leq 0$.  Now by hypercontractivity, we find that
\begin{align*}
\mathbb{E}[\mathcal{I}_N^p] & \leq \sup_{k \in \mathbb{Z}^2_0} C_{p, \epsilon} \sup_{t \in [0, T]} \int_0^t \sum_{h \neq 0, k} \frac{|h|^p \,\mathbb{E}[|\Pi_N\psi_\tau(e_h)|^p]}{|k - h|^{s - 1}} + \sum_{h \neq 0, k} \frac{|k - h|^p\,\mathbb{E}[|\psi_\tau^{(N)}(e_{k - h})|^p]}{|h|^{s - 1}} \, d\tau \\
& \leq \sup_{k \in \mathbb{Z}^2_0} C_{p, \epsilon} \sup_{t \in [0, T]} \int_0^t \sum_{h \neq 0, k} \frac{|h|^p \,\mathbb{E}[|\Pi_N\psi_\tau(e_h)|^2]^{\frac{p}{2}}}{|k - h|^{s - 1}} + \sum_{h \neq 0, k} \frac{|k - h|^p\,\mathbb{E}[|\psi_\tau^{(N)}(e_{k - h})|^2]^{\frac{p}{2}}}{|h|^{s - 1}} \, d\tau \\
& \leq \sup_{k \in \mathbb{Z}^2_0} C_{p, \epsilon} \sup_{t \in [0, T]} \int_0^t \sum_{h \neq 0, k} \frac{1}{|k - h|^{s - 1}} + \sum_{h \neq 0, k} \frac{1}{|h|^{s - 1}} \, d\tau \\
& \leq C_{p, \epsilon}T,
\end{align*}
where since we have chosen $s > 3$ the above sums in $h$ converge, provided $\epsilon$ is chosen sufficiently small.

Next, we have from a Borel-Cantelli Lemma argument using the above estimate on $\mathcal{I}_N(T)$ that $\mathcal{I}_N(T) \to 0$ as $T \to 0$ almost surely, with the rate of convergence depending on $p$ and $\epsilon$.  Accordingly we may choose a time $\mathfrak{t} > 0$ for which $C_{p, \epsilon}\mathcal{I}_N(\mathfrak{t}) \leq \frac12$.  The differential inequality \eqref{DiffInequality} then implies that $\mathcal{D}_N \to 0$ as $N \to \infty$ almost surely.  This establishes the existence of a time $T > 0$ depending on $p, \epsilon, \psi_0$ on which pathwise uniqueness holds.

\textbf{Step 2 : Pathwise uniqueness on time intervals of arbitrary length.}  In order to extend this result to an arbitrary time interval $[0, T]$ we use a continuity argument.  Fixing an arbitrary interval $[0, T]$ and initial data $\psi_0$, and denote by $\Psi_T$ the ensemble of solutions $\psi(\omega, t)$ in the class $C([0, T] : \mathcal{F}L^{\infty, s})$ for which $\psi(\omega, 0) = \psi_0$.  Since $\Psi_T$ is nonempty, fix some particular solution $\psi(t)$, and denote the other solutions in $\Psi_T$ by $\psi_\omega(t)$.  Suppose that pathwise uniqueness does not hold: then there exists a first time $T_* < T$ for which $$\mathbb{P}\left(\sup_{[0, T_*]} \|\psi_\omega - \psi\|_{\mathcal{F}L^{\infty, s}} > 0 \right) = 0,$$ but for any $\mathcal{T} \in (T_*, T]$ we have $$\mathbb{P}\left(\sup_{[0, \mathcal{T}]} \|\psi_\omega - \psi\|_{\mathcal{F}L^{\infty, s}} > 0 \right) > 0.$$  However, if we now apply the uniqueness result proved in Step 1 at time $t = T_*$, we see that there is a nonvacuous interval $[T_*, T_* + \mathfrak{t}]$ with $\mathfrak{t} > 0$ depending on $p, \epsilon$ and $\psi(T_*)$ for which $$\mathbb{P}\left(\sup_{[0, T_* + \mathfrak{t}]} \|\psi_\omega - \psi\|_{\mathcal{F}L^{\infty, s}} > 0 \right) = 0,$$ which contradicts the assumed maximality of $T_*$.  We conclude that pathwise uniqueness holds on an interval $[0, T]$ of arbitrary length.

Since we have shown pathwise uniqueness in the class $C([0, T] : \mathcal{F}L^{\infty, 2\delta - 3\delta\epsilon})$, it holds \textit{a fortiori} in the class $C([0, T] : \mathcal{F}L^{\infty, (-\epsilon) \wedge ( 2\delta - 3\delta\epsilon)})$.  Finally, by applying this result successively to a sequence of times $T \to \infty$, we conclude that such pathwise unique solutions can be extended to the class $C([0, \infty) : \mathcal{F}L^{\infty, (-\epsilon) \wedge ( 2\delta - 3\delta\epsilon)})$.

\end{document}